\documentclass[12pt]{amsart}
\usepackage{graphicx,amssymb,latexsym,bm,color,rotating}
\usepackage[dvips,centering,includehead,width=15.1cm,height=22.7cm]{geometry}
\usepackage{amsmath}
\usepackage{graphicx}
\usepackage{epsfig}
\usepackage{amssymb}
\usepackage[latin1]{inputenc}
\usepackage{multirow}
\usepackage{hyperref}

\input{xy} \xyoption{all}

\setcounter{tocdepth}{2} \setcounter{tocdepth}{1}
\numberwithin{equation}{section}

\newtheorem{theorem}{Theorem}[section]
\newtheorem{lemma}[theorem]{Lemma}

\theoremstyle{definition} \newtheorem{definition}[theorem]{Definition}
\newtheorem{example}[theorem]{Example}
\newtheorem{remark}[theorem]{Remark}


 \newcommand{\F}{{\mathbb F}}

\newcommand{\D}{{\mathcal D}} \renewcommand{\P}{{\mathcal P}}

\newcommand{\dive}{{\text{div}}}
\newcommand{\inte}{{\text{int}}}

 \newcommand{\cone}{{\text{cone}}}
 \newcommand{\Ch}{{\text{Ch}}}

 \newcommand{\RR}{{\mathbb R}}
\newcommand{\ZZ}{{\mathbb Z}} \newcommand{\NN}{{\mathbb N}}
 
\newcommand{\CC}{{\mathbb C}}




\newcommand{\ww}{ {\bf w} }

\newcommand{\dd}{ {\bf d} }

\newcommand{\xx}{ {\bf x} }
\newcommand{\yy}{ {\bf y} }

\makeatletter
\newtheorem*{rep@theorem}{\rep@title}
\newcommand{\newreptheorem}[2]{%
\newenvironment{rep#1}[1]{%
 \def\rep@title{#2 \ref{##1}}%
 \begin{rep@theorem}}%
 {\end{rep@theorem}}}
\makeatother
\newreptheorem{theorem}{Theorem}
\newreptheorem{corollary}{Corollary}

\title[Double Gromov-Witten invariants of Hirzebruch surfaces]{The double Gromov-Witten invariants of Hirzebruch surfaces are piecewise polynomial}

\author{Federico Ardila \and Erwan Brugall\'e}
\date{\today}
\address{Federico Ardila. San Francisco State University, San Francisco, CA, USA. \newline Universidad de Los Andes, Bogot\'a, Colombia. University of California, Berkeley, CA, USA.}
\email{federico@sfsu.edu}
\address{Erwan Brugall\'e, Centre Mathématiques Laurent Schwartz, 91 128 Palaiseau Cedex, France.}  
\email{erwan.brugalle@math.cnrs.fr}
\thanks {\emph {2010 Mathematics Subject Classification: Primary:
    14N10, 51M20, 52B20. Secondary: 14N35. 
}}
\keywords {Enumerative geometry, Hirzebruch surfaces, Gromov-Witten
  invariants, chamber structure, Ehrhart theory, flow polytopes, Kostant partition functions}
\thanks{The first author was partially supported by the United States National Science Foundation CAREER Award DMS-0956178.}

 \begin{document}

\begin{abstract}
We define the \emph{double Gromov-Witten invariants}  of Hirzebruch
surfaces in analogy with double Hurwitz numbers, and we prove 
that  they 
satisfy a piecewise polynomiality property analogous to their
 1-dimensional counterpart. 
Furthermore
we show that each polynomial piece is either even or odd, and we compute its degree.
Our methods combine floor diagrams and Ehrhart theory.

\end{abstract}

\maketitle

\section{Introduction}
\label{sec:intro}
\emph{Hurwitz numbers} count the holomorphic maps $C\to \CC P^1$
of a fixed degree $d$, with prescribed ramification values, and prescribed
ramification profiles over each ramification value.
These numbers are connected to several areas of mathematics including
algebraic geometry, combinatorics, and representation theory, among others.
In particular, the ELSV formula  \cite{ELSV} relates 
\emph{simple} Hurwitz numbers (where there is a single
critical value with possibly less than $d-1$ preimages) to
the moduli spaces of complex algebraic curves $\overline{\mathcal M}_{g,n}$.

No generalization of the ELSV-formula is known yet for \emph{double}
Hurwitz numbers; however, 
as possible evidence toward such a generalization,
these numbers  enjoy a very
 rich structure. In particular, Goulden, Jackson, and Vakil proved,
 among other things,
that double Hurwitz numbers are piecewise polynomial 
 \cite{Vak3}.
Later on, 
 Cavalieri, Johnson, and Markwig used tropical geometry to give a new proof of  this piecewise
polynomiality 
\cite{CJM,CJM11}. In addition, they found a wall crossing
formula giving the difference of double Hurwitz numbers between two
adjacent chambers of polynomiality, generalizing the formula in genus
0 proved in \cite{SSV}.
An alternative approach to prove piecewise polynomiality and obtain these wall crossing formulas,
 based on the 
De Concini-Procesi-Vergne spaces of \cite{DPV1, DPV2},  was proposed by
the first author in \cite{Ard09}. 

\bigskip
\noindent \emph{Results.} 
In this note  we  introduce the
 \emph{double
  Gromov-Witten invariants of Hirzebruch surfaces}, which 
generalize double Hurwitz numbers, and we establish their piecewise
polynomiality.
We now make these assertions more
precise, referring
 to Section 
 \ref{sec:double hirz} for  precise definitions.

Given $k\ge 0$, we denote by $\F_k=\mathbb
P(\mathcal O_{\CC 
  P^1}(k)\oplus \mathcal O_{\CC P^1})$ the \textit{$k$th
  Hirzebruch surface}. We say that an algebraic curve in $\F_k$ is of
bidegree $(a,b)$ if it is linearly equivalent to the union of $a$
copies of  the section 
$\mathbb P(\mathcal O_{\CC P^1}(k)\oplus \{0\})$ and $b$ copies of
a fiber (see Section \ref{sec:hirz}).
The 
double Gromov-Witten invariants
of $\F_k$, 
which standard conventions denote 
$N^{\alpha,\beta,\widetilde \alpha,\widetilde
  \beta}_g(a,b,k)$,
count
algebraic
curves in $\F_k$ of a given bidegree $(a,b)$ and genus $g$, passing through an
appropriate configuration of points, and having fixed intersection
patterns
$\alpha,\beta,\widetilde \alpha,\widetilde\beta$
 with the sections 
$\mathbb P(\mathcal O_{\CC P^1}(k)\oplus \{0\})$ and
$\mathbb P(\{0\}\oplus \mathcal O_{\CC P^1})$. 

\medskip

It is more convenient for us to encode the double Gromov-Witten invariants of $\F_k$ in a function $F_{a,k,g}^{n_1,n_2}(\xx,\yy)$ as follows.
Let us fix $a>0$ and $k,g\ge 0$ as above, and let us also fix two additional 
non-negative integer numbers $n_1$ and $n_2$. 
We then define
$$\Lambda=\left\{(x_1,\ldots,x_{n_1},y_1,\ldots,y_{n_2}) \in \ZZ^{n_1}\times \ZZ^{n_2}
\ | \ \sum x_i +\sum y_j+ak=0\right\}\subset
\RR^{n_1}\times \RR^{n_2}.$$

Given an element $(\xx,\yy) = ((x_1,\ldots,x_{n_1}),(y_1,\ldots,y_{n_2})) \in \Lambda$, we store the multiplicities of the entries of $(\xx,\yy)$ in the 
four sequences $\alpha=(\alpha_i)_{i\ge 1},
\beta=(\beta_i)_{i\ge 1},\widetilde \alpha=(\widetilde 
\alpha_i)_{i\ge 1},$ and $\widetilde \beta=(\widetilde \beta_i)_{i\ge
  1}$ of non-negative integers, where $\alpha_i$ is the number of elements $x_j$
equal to 
$-i$,
$\beta_i$  is the number of elements $y_j$ equal to 
$-i$,
  $\widetilde \alpha_i$ is  the number of elements $x_j$ equal to 
$i$,
and  $\widetilde \beta_i$ is  the number of elements $y_j$ equal to 
$i$.
 Set
$b=\sum i(\widetilde \alpha_i +\widetilde\beta_i)$. In our examples, we will omit the parentheses in $\alpha,\widetilde
\alpha, \beta,$ and $\widetilde\beta$ 
to simplify the notation.
 For instance, the vector $\alpha = (2,0,1)$ will be denoted by $\alpha = 201$.

\begin{definition} The function $F_{a,k,g}^{n_1,n_2}(\xx,\yy)$ is defined by:
$$\begin{array}{cccc}
F_{a,k,g}^{n_1,n_2} : & \Lambda &\longrightarrow & \ZZ
\\ & (\xx,\yy) &\longmapsto & N^{\alpha,\beta,\widetilde \alpha,\widetilde
  \beta}_g(a,b,k)
\end{array}. $$
\end{definition}
\begin{example}
We have
\[
F_{3,2,1}^{4,5}((-2,- 2, -1,1), (-3, -1, -1,1, 2)) = N_1^{12, 201,1,11}(3,4,2)
\]
because the multiplicities in $(\xx,\yy)=((-2, -2, -1, 1), (-3, -1,
-1,1, 2))$ are given by 
$(\alpha,\beta,\widetilde\alpha,\widetilde\beta) = (12, 201,1, 11)$. 
Since the superscript of
$F_{3,2,1}$ denotes the sizes of the input vectors, we can drop it and
write 
$F_{3,2,1}\left((-2, -2, -1, 1), (-3, -1, -1, 1, 2)\right)$.
\end{example}

The following theorems are the main results of this note.

\begin{theorem}\label{th:main}
Let $k,g, n_1, n_2 \ge 0$ and $a \ge 1$ be 
fixed integers.
The function
\[
F_{a,k,g}^{n_1,n_2}(\xx,\yy)
\]
of double Gromov-Witten invariants of the Hirzebruch surface $\F_k$
is piecewise polynomial relative to the chambers of the 
hyperplane arrangement
\begin{eqnarray*}
\sum_{i \in S} x_i + \sum_{j \in T} y_j + kr= 0 &&  
(S \subseteq [n_1], \,\, T\subseteq [n_2], \,\, 0 \leq r \leq a), \\
y_i-y_j=0 &&  (1 \leq i < j \leq n_2)
\end{eqnarray*}
inside 
$\Lambda=\{(x_1,\ldots,x_{n_1},y_1,\ldots,y_{n_2}) \in \ZZ^{n_1}\times \ZZ^{n_2}
\ | \ \sum x_i +\sum
y_i+ak=0\}\subset
\RR^{n_1}\times \RR^{n_2}.$
\end{theorem}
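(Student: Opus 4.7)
My plan is to prove Theorem~\ref{th:main} by combining the floor diagram formula for Gromov--Witten invariants of Hirzebruch surfaces with an Ehrhart-theoretic analysis of the resulting marking counts. First I would use the floor diagram correspondence theorem to express
\[
F_{a,k,g}^{n_1,n_2}(\xx,\yy) \;=\; N^{\alpha,\beta,\widetilde\alpha,\widetilde\beta}_g(a,b,k) \;=\; \sum_{\D} \mu(\D)\, \nu_{\xx,\yy}(\D),
\]
where the sum ranges over a finite set of combinatorial types of floor diagrams $\D$ of bidegree $(a,b)$ and genus $g$, $\mu(\D)$ is the intrinsic weight multiplicity of $\D$, and $\nu_{\xx,\yy}(\D)$ is the number of markings of $\D$ compatible with the boundary data $(\xx,\yy)$. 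It then suffices to prove piecewise polynomiality for each summand with respect to the arrangement in the theorem.

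For the multiplicity, once the combinatorial type of $\D$ is fixed, every edge weight is determined by linear flow-conservation equations at the vertices whose inputs are the $x_i$'s, the $y_j$'s, and $k$. Solving these equations along a spanning tree writes each edge weight as an affine-linear function of $(\xx,\yy)$, so $\mu(\D)$ is a polynomial in $(\xx,\yy)$.

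For the marking count $\nu_{\xx,\yy}(\D)$, I would first fix a total order of the $n_2$ fiber marked points --- this choice is stable precisely off the walls $y_i - y_j = 0$ --- and then interpret $\nu_{\xx,\yy}(\D)$ as the number of lattice points of the parametric flow polytope $P_\D(\xx,\yy)$ whose facet inequalities impose non-negativity of each edge weight. By the flow-conservation solution, each such facet inequality rewrites as
\[
\sum_{i \in S} x_i + \sum_{j \in T} y_j + kr \;\geq\; 0
\]
for some $S \subseteq [n_1]$, $T \subseteq [n_2]$, and $0 \leq r \leq a$: the left-hand side records the total flow across a horizontal cut of $\D$ between its $r$-th and $(r{+}1)$-st floor. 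The standard piecewise polynomiality of vector partition (Kostant partition) functions then implies that $\nu_{\xx,\yy}(\D)$ is polynomial on each chamber of the arrangement of Theorem~\ref{th:main}, and summing over $\D$ yields the theorem.

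The main obstacle will be verifying that these flow-polytope facet inequalities take this uniform form across every combinatorial type $\D$, and that the floor-diagram setup correctly separates the two sides of $\F_k$ (the two sections together with the fibers) so that the subsets $S \subseteq [n_1]$, $T \subseteq [n_2]$ and the integer $r \in \{0,\ldots,a\}$ arise precisely as described. Once this uniformity and compatibility are established, each summand is piecewise polynomial with respect to a common refinement of the target arrangement, and Theorem~\ref{th:main} follows immediately.
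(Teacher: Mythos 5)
Your overall skeleton --- the floor diagram correspondence, flow polytopes whose facet/cut data restrict the all-subset arrangement to the walls $\sum_{i\in S} x_i + \sum_{j\in T} y_j + kr = 0$, and a symmetrization over orderings of $\yy$ accounting for the walls $y_i-y_j=0$ --- is the same as the paper's. But there is a genuine gap at your first step, and it is fatal for every genus $g\geq 1$: you claim that once the combinatorial type of $\D$ is fixed, ``every edge weight is determined by linear flow-conservation equations,'' so that $\mu(\D)$ is a single polynomial in $(\xx,\yy)$ and the remaining count $\nu_{\xx,\yy}(\D)$ is an \emph{unweighted} lattice-point count. This holds only when the underlying graph is a tree, i.e.\ when $g=0$. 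Since the genus of a floor diagram is the first Betti number of its graph, for $g\geq 1$ the conservation equations determine the weighting only after the flows on $g$ cycle-breaking edges are chosen: the admissible weightings form the full set of lattice points of a flow polytope of dimension $g$, and the multiplicity $\pi_{\inte}(w)=\prod_{e\ \text{internal}} w(e)$ varies from lattice point to lattice point. Hence the factorization $\mu(\D)\cdot\nu_{\xx,\yy}(\D)$ is invalid (it also conflates weightings with markings), and what must be controlled is the weighted sum $F_{G,k}(\xx,\yy)=\sum_{w\in \Phi_G(\dd)\cap\ZZ^{E(G)}}\pi_{\inte}(w)$; the ``standard piecewise polynomiality of vector partition functions'' you invoke says nothing about such weighted sums. (That the polytope is genuinely $g$-dimensional is not a side issue: it is exactly how the degree $n_2+3g+2a-2$ of Theorem \ref{th:degree} arises, as $g$ from the dimension plus $n_2+2(g+a-1)$ from the degree of the weight.)

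This is precisely the gap the paper fills with Theorem \ref{th:pp}: the weighted partition function $\P_{X,\pi_Y}$ is piecewise polynomial relative to $\Ch(X)$, proved via the doubling/inclusion--exclusion identity $\P_{X,\pi_Y}(c)=\sum_{T\subseteq Y}(-1)^{|Y-T|}\,\P_{X\cup T}(c)$, which reduces the weighted count to ordinary partition functions of enlarged configurations having the same chamber complex. With that lemma your plan goes through essentially verbatim: the columns of the incidence matrix of $G$ lie in the root system $A_{m-1}$ and are therefore unimodular --- a point you should make explicit, since without unimodularity the standard results only give \emph{quasi}polynomials; restricting to $d_v=k$ at black and $d_v=0$ at gray vertices turns the all-subset walls into $\sum_{i\in S}x_i+\sum_{j\in T}y_j+kr=0$ with $0\leq r\leq a$ counting black vertices on one side of a cut, as you describe; and symmetrizing over $\mathfrak S_{n_2}$, with the factor $1/\prod_i \beta_i!\,\widetilde\beta_i!$ that is constant on each chamber (this is where the walls $y_i-y_j=0$ are genuinely needed), completes the argument. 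As written, however, your proposal proves the theorem only for $g=0$.
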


\begin{theorem}\label{th:degree}
Each polynomial piece of $F_{a,k,g}^{n_1,n_2}(\xx,\yy)$ has degree
$n_2+3g+2a-2$, and is either even or odd. 
\end{theorem}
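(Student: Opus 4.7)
The plan is to extend the floor-diagram and Ehrhart-theoretic machinery used in the proof of Theorem \ref{th:main}. In that framework one has, inside each chamber $C$ of the hyperplane arrangement, an expression of the form
\[
F_{a,k,g}^{n_1,n_2}(\xx,\yy)=\sum_{D} \mu(D)\cdot L_D(\xx,\yy),
\]
where $D$ runs over floor diagrams compatible with the multiplicity data of $(\xx,\yy)$, $\mu(D)>0$ is a combinatorial weight, and $L_D(\xx,\yy)$ counts lattice points in a flow polytope $P_D(\xx,\yy)\subset \RR^{E(D)}$ whose facet-defining inequalities depend linearly on $(\xx,\yy)$. By Ehrhart theory, each $L_D$ restricts to a polynomial on $C$ of degree equal to $\dim P_D(\xx,\yy)$ and with leading coefficient given by the normalized volume of $P_D$.

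The degree statement then rests on a uniform dimension count. For a connected floor diagram, the flow polytope in $\RR^{E(D)}$ is cut out by $|V(D)|-1$ independent conservation equations, so
\[
\dim P_D(\xx,\yy)=|E(D)|-|V(D)|+1.
\]
Using the combinatorial constraints on floor diagrams of bidegree $(a,b)$ and genus $g$ carrying the $n_2$ elevator markings from $\yy$, one computes the upper bound
\[
\dim P_D(\xx,\yy)\le n_2+3g+2a-2,
\]
with equality characterizing a distinguished family of extremal diagrams (roughly, those whose elevator subgraph is as bivalent as possible). Since both the multiplicities $\mu(D)$ and the normalized volumes of the extremal $P_D$ are strictly positive, the top-degree coefficients cannot cancel in the sum, so the chamberwise polynomial has degree exactly $n_2+3g+2a-2$.

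For the parity statement, the key observation is that reversing all edge orientations in $D$ induces an affine isomorphism $P_D(\xx,\yy)\cong P_D(-\xx,-\yy)$, which combined with Ehrhart reciprocity produces a symmetry of the form $L_D(-\xx,-\yy)=(-1)^{\dim P_D}L_D(\xx,\yy)$ in the open chamber (where closed and interior lattice-point counts coincide). If every floor diagram contributing to the chamber has $\dim P_D$ of the same parity as $n_2+3g+2a-2$, then the entire polynomial piece satisfies $F(-\xx,-\yy)=(-1)^{n_2+3g+2a-2}F(\xx,\yy)$, and is therefore even or odd according to the parity of $n_2+3g+2a-2$.

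I expect the most delicate step to be this last one: showing that \emph{every} floor diagram contributing to a given chamber -- not only the extremal ones -- has $\dim P_D$ of the correct fixed parity. A natural path is to verify that any admissible local modification of a floor diagram (splitting an elevator, redistributing genus across floors, merging two floors, etc.) changes $|E(D)|-|V(D)|$ by an even number, so that $\dim P_D \pmod 2$ is a diagram-independent invariant depending only on $(a,g,n_2)$. A secondary technical point is that Ehrhart reciprocity must be applied after correctly absorbing the affine shift by the constant $ak$ appearing in the definition of $\Lambda$, so that the flow polytopes are presented in a form where negating $(\xx,\yy)$ corresponds cleanly to passing to the opposite polytope.
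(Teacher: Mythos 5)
Your proposal has a structural flaw that undermines both halves of the argument: you treat the multiplicity $\mu(D)$ as a constant attached to each diagram, multiplying an \emph{unweighted} Ehrhart count $L_D$. But $\mu(\D)$ is the product of the internal edge weights, hence a polynomial function $\pi_{\inte}(w)$ of the lattice point $w$ itself, and the chamberwise contribution is the weighted sum $F_{G,k}(\xx,\yy)=\sum_{w\in W_{G,k}(\xx,\yy)}\pi_{\inte}(w)$ --- a discrete integral of a monomial of degree $n_2+2(g+a-1)$ (the number of internal edges, by Lemma \ref{lemma:VEcount}) over the flow polytope. This matters decisively for the degree: your own formula $\dim P_D=|E(D)|-|V(D)|+1$ is the first Betti number of the diagram, which is $g$ by the very definition of its genus, for \emph{every} contributing diagram. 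So your inequality $\dim P_D\le n_2+3g+2a-2$ is vacuous, the ``extremal diagrams'' achieving equality do not exist except in the degenerate case $n_2=g=0$, $a=1$, and an unweighted count would produce polynomials of degree $g$, not $n_2+3g+2a-2$. The correct degree arises as $g+\bigl(n_2+2(g+a-1)\bigr)$, i.e.\ polytope dimension \emph{plus} degree of the weight, via the Riemann-sum argument at the end of the proof of Theorem \ref{th:pp}.

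The parity argument inherits the same defect. Your symmetry $L_D(-\xx,-\yy)=(-1)^{\dim P_D}L_D(\xx,\yy)$ would make each piece even or odd according to $(-1)^g$, which contradicts the theorem whenever $n_2$ is odd, since $n_2+3g+2a-2\equiv n_2+g \pmod 2$. The missing sign is exactly that of the weight under negation, $\pi_{\inte}(-w)=(-1)^{n_2+2(g+a-1)}\pi_{\inte}(w)$: weighted Ehrhart reciprocity (Theorem \ref{th:Ehrhart}) contributes $(-1)^g$ from the dimension and the weight contributes $(-1)^{n_2}$, giving the stated $(-1)^{n_2+3g+2a-2}$. Relatedly, your parenthetical claim that closed and interior lattice-point counts coincide on an open chamber is false for unweighted counts (boundary lattice points, where some free edge weight vanishes, do occur); what is true, and what the paper actually uses, is that $\pi_{\inte}$ vanishes on the boundary facets $w_e=0$, so the \emph{weighted} closed and interior sums agree. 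Finally, the delicate step you flag --- that $|E(D)|-|V(D)|$ might have varying parity across diagrams --- is moot: it always equals $g-1$.
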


Theorems \ref{th:main} and \ref{th:degree} might suggest that a 2-dimensional generalization of the ELSV-formula could exist.

\bigskip

\noindent \emph{Techniques.} 
Our proofs of Theorems \ref{th:main} and \ref{th:degree}
 combine the enumeration of complex curves in complex surfaces
via \emph{floor diagrams},
together with  the approach proposed in \cite{Ard09, CJM11} to
study the piecewise polynomiality of double Hurwitz numbers.
Floor diagrams  were
introduced in {\cite{Br6b,Br7,Br6}}, and further explored in
{\cite{ArdBlo, Br8,Blo11, BlCoKe13,BGM,BlGo14,BlGo14bis,Br14,Br9,Bru14, FM, Liu13,LiuOss}}.
They allow  one to replace the geometric enumeration of curves by 
a purely combinatorial problem, applying the following general strategy. Suppose that one wants to enumerate algebraic curves in some complex
surface $X$ interpolating a configuration of points $\P$. Choose 
 a  non-singular rational curve $E$ in  $X$, and  degenerate $X$ 
into the union  of $X$ together with a
 chain of copies of the compactified normal bundle $\mathcal N_E$ 
of $E$ in $X$; moreover, 
specialize exactly one  point of $\P$ to
 each of these copies of $\mathcal N_E$. Now
floor diagrams encode the limit of curves
under enumeration in this degeneration process.
In good situations, including the one we deal with here,
all limit curves  can be
completely 
recovered only from the combinatorics of the 
floor
diagrams. We refer to \cite[Section 1.1]{Bru14} for more details about
the heuristic of the floor decomposition technique,
as well as to \cite{IP00,LiRu01,Li02} for degeneration formulas in
enumerative geometry.

It turns out that the tropical  count of double Hurwitz numbers performed in \cite{CJM} can be
interpreted as a floor diagram count in dimension 1.
The underlying combinatorial objects are very similar, and thus   
we are able to transpose part of the approach
from \cite{Ard09, CJM11}  to the 2-dimensional case. The key idea is to interpret the relevant 
combinatorial problem as the (weighted) enumeration of lattice points in flow polytopes, and apply techniques from Ehrhart theory.

\bigskip

\noindent \emph{Organization.} 
The paper is organized as follows. Double Gromov-Witten invariants of Hirzebruch surfaces are defined in
Section \ref{sec:double hirz}, and we explain how to compute them via floor
diagrams in Section \ref{sec:floordiagrams}. This reduces our enumerative geometric question to a combinatorial question, which we treat in the remaining sections. 
Section
\ref{sec:partition} recalls and extends some facts from 
 Ehrhart theory. 
We use these results in Section \ref{sec:proof}, where we rephrase the enumeration of floor diagrams in terms of polyhedral geometry, and complete the proof of 
our main results. 
We work out a concrete
example in Section \ref{sec:example}, and 
end the paper in
Section
\ref{sec:conclusion} with some concluding remarks about possible
extensions of this work.

\section{Double Gromov-Witten invariants of Hirzebruch surfaces}

In this section, to state our results, we will require some
familiarity with the geometry of complex curves;
 for an introduction, see \cite{GriHar78,Beau}. For more details about the
 enumerative geometry of Hirzebruch surfaces, we refer to \cite{Vak2}.

\label{sec:double hirz}
\subsection{Hirzebruch surfaces}\label{sec:hirz}
Recall the $k$th
  Hirzebruch surface, with $k\ge 0$, is denoted by
 $\F_k$, i.e. 
 $\F_k=\mathbb
P(\mathcal O_{\CC 
  P^1}(k)\oplus \mathcal O_{\CC P^1})$.
Any compact complex surface  admitting a holomorphic fibration
to $\CC P^1$ with fiber
 $\CC P^1$ is isomorphic to exactly one of the Hirzebruch surfaces.

For example one has $\F_0=\CC P^1\times \CC P^1$.
The surface $\F_1$ is the projective plane blown up at a point,
and $\F_2$ is the quadratic cone with equation $x^2+y^2+z^2$ in
$\CC P^3$ blown up at the node. In the last two cases, the fibration
 is given by the 
extension  of the projection from the blown-up point to a 
line (if
$k=1$) or a hyperplane section (if $k=2$) 
that does not pass through the blown-up point.

Let us denote by $B_k$ (resp. $E_k$ and $F_k$) the section 
$\mathbb P(\mathcal O_{\CC P^1}(k)\oplus \{0\})$ (resp. the section
$\mathbb P(\{0\}\oplus \mathcal O_{\CC P^1})$ and a fiber). 
The curves $B_k$, $E_k$, and $F_k$ have self-intersections
$B_k^2=k$, $E_k^2=-k$, and  $F_k^2=0$. When $k\ge 1$, the curve $E_k$ itself
determines uniquely the 
Hirzebruch surface, since it is the only reduced and irreducible
algebraic curve in $\F_k$ with negative self-intersection.
The group 
$\mbox{Pic}(\F_k)=\mbox{H}_2(\F_k,\mathbb Z)$
is isomorphic to $\mathbb Z\times\mathbb Z$ and is generated by the
classes of $B_k$ and  $F_k$. Note that we have
$E_k=B_k-kF_k$ in $\mbox{H}_2(\F_k,\mathbb Z).$ 
An algebraic curve $C$
in $\F_k$ is 
said to be of \textit{bidegree} $(a,b)$ if it realizes the homology
class $aB_k+bF_k$ in $\mbox{H}_2(\F_k,\mathbb Z)$.
By the adjunction formula, a non-singular algebraic curve $C$ of bidegree 
$(a,b)$ in $\F_k$ has genus 
$$g(C)=\frac{a(a-1)}{2}k +ab -a -b +1 .$$

\subsection{Double Gromov-Witten invariants}\label{sec:double GW}
Let us fix four  
integers\footnote{As we will see in Remark \ref{rem:a=0}, one can
  extend our definition of Gromov-Witten invariants to the case $a=0$
  with some extra care.}
$a>0$, and $b,k,g\ge 0$, as well as
four sequences of non-negative
integers
  $\alpha=(\alpha_i)_{i\ge 1}$, $\widetilde \alpha
=(\widetilde \alpha_i)_{i\ge 1} $, $\beta=(\beta_i)_{i\ge 1}$, $\widetilde \beta
=(\widetilde \beta_i)_{i\ge 1} $
  such that
$$\sum_i i(\alpha_i+\beta_i)=ak+b  \quad \text{and}\quad 
\sum_i i(\widetilde \alpha_i+\widetilde \beta_i)=b. $$
In particular
this implies that only finitely many terms
of the four sequences are non-zero. We define
$l=2a+g+\sum (\beta_i+\widetilde\beta_i ) -1$. 

Next, let us choose a generic configuration 
$$\omega=\{q^1_1,\ldots, q^1_{\alpha_1},\ldots,q^i_1,\ldots, q^i_{\alpha_i},\ldots,
 p_1,\ldots, 
p_{l}, \widetilde q^1_1,\ldots,
\widetilde q^1_{\alpha_1},\ldots,\widetilde q^i_1,\ldots, 
\widetilde q^i_{\alpha_i},\ldots\}$$
 of $l+\sum\left(\alpha_i+\widetilde\alpha_i \right)$ points in $\F_k$ such that
$q^i_j\in B_k$, $\widetilde q^i_j\in E_k$, and 
$p_i\in \F_k\setminus(B_k\cup E_k)$.

We denote by $N^{\alpha,\beta,\widetilde \alpha,\widetilde
  \beta}_g(a,b,k)$ the number of irreducible
 complex algebraic curves $C$ in $\F_k$ of
genus $g$ such that
\begin{enumerate}
\item $C$ passes through all the points $q^i_j$, $\widetilde q^i_j$, and $p_i$;
\item $C$ has order of contact $i$ with $B_k$ at $q^i_j$, and
 has $\beta_i$ other (non-prescribed) points with order of contact
  $i$ with $B_k$;
\item $C$ has order of contact $i$ with $E_k$ at $\widetilde q^i_j$,
  and
has $\widetilde \beta_i$ other (non-prescribed) points with order of contact
  $i$ with $E_k$.
\end{enumerate}

This number is finite and doesn't depend on the chosen generic
configuration of points. We call this number a \emph{double
  Gromov-Witten invariant\footnote{These numbers are also called
    \emph{Gromov-Witten invariants of $\F_k$ relative to $B_k\cup
      E_k$} and have an intersection theoretic interpretation in some
    suitable moduli spaces, see for example \cite{IP00}.} of $\F_k$} in analogy with double Hurwitz
numbers.

\begin{example}
Recall the we  omit the parentheses in $\alpha,\widetilde
\alpha, \beta,$ and $\widetilde\beta$ 
to simplify the notation.
By the adjunction formula, one has $N^{\alpha,\beta,\widetilde \alpha,\widetilde
  \beta}_g(a,b,k)=0$ as soon as $g> \frac{a(a-1)}{2}k +ab -a -b +1 $.

It is well known that there exist exactly 2 conics in $\CC P^2$
through 4 points in generic positions and tangent to a fixed
line.
In our notation, this translates to 
$N^{0,01,0,0}_0(2,0,1)=2$.

If we now look at conics 
through 3 points in generic position
in $\CC P^2$ and tangent to a fixed line at a prescribed point, then
we find exactly one such conic, i.e. $N^{01,0,0,0}_0(2,0,1)=1$.

As  less straightforward examples, we give the values
$N^{01,0,0,01}_0(2,2,0)=8$ and 
$N^{01,1,0,1}_0(3,1,1)=8$. We will compute these numbers in the
next section, using floor diagrams.
\end{example}

\begin{remark}\label{rem:a=0}
Theorem \ref{th:main} extends trivially to the case $a=0$. 
However, for simplicity we chose to define 
the invariants $N^{\alpha,\beta,\widetilde \alpha,\widetilde
  \beta}_g(a,b,k)$ by counting immersed algebraic curves instead of
maps. As a consequence, the cases $a=0$ and $b>1$ are problematic with
our simplified definition because of the appearance of
 non-reduced curves. However with the suitable definition of
 Gromov-Witten invariants in terms of maps, we have (see
\cite[Section 8]{Vak2})
$$N^{u_b,0,0,u_b}_0(0,b,k)=N^{0,u_b,u_b,0}_0(0,b,k)=\frac1b \quad
\mbox{and}\quad
N^{0,u_b,0,u_b}_0(0,b,k)=1,$$
where $u_b$ is the $b-th$ vector of the canonical basis of $\RR^n$,
and 
$$N^{\alpha,\beta,\widetilde \alpha,\widetilde
  \beta}_g(0,b,k)=0$$
in all the other cases. In particular, the conclusion of Theorem
\ref{th:main} holds when $a=0$ and either $g\ne 0 $ or $n_1=0$. Note
that in the case when $a=g=0$ and $n_1=1$, the function 
\[
F_{0,k,0}^{1,1}(\pm b,\mp b)=\frac1b
\]
is rational instead of polynomial.
\end{remark}

\section{Floor diagrams}
\label{sec:floordiagrams}

Here we recall how to enumerate complex curves in $\F_k$ using
floor diagrams. 
We use
notation inspired by {\cite{FM,ArdBlo}}.

\begin{definition}\label{def:fd}
A \emph{marked floor diagram} $\D$ for $\F_k$  consists of

\begin{enumerate}

\item
 A vertex set $V = L \sqcup C \sqcup R$ where $C$ is totally ordered
 from left to right,  $L=\{q_1,\ldots,q_{l}\}$ is unordered and to
 the left of $C$, and 
$R=\{\widetilde q_1,\ldots,\widetilde q_{r}\}$ is unordered and to the right of $C$.

\item
 A coloring of the vertices with black, white, and gray, so that every vertex in $L$ and $R$ is white.

\item
A set $E$ of edges, directed from left to right, such that 

\noindent $\bullet$ The resulting graph is connected.

\noindent $\bullet$ Every white vertex 
is incident to
exactly one edge, which connects it to a black vertex.

\noindent  $\bullet$ Every gray vertex 
is incident to
exactly two edges; one coming from a black vertex, and the other one going to a black vertex.
 
 \item
A choice of positive integer weights $w(e)$ on each edge $e$ such that if we define the \emph{divergence} of a vertex $v$ to be
\begin{displaymath}
\dive(v) := \sum_{ \tiny
     \begin{array}{c}
  \text{edges }e\\
v' \stackrel{e}{\to} v
     \end{array}
} w(e) -   \sum_{ \tiny
     \begin{array}{c}
  \text{edges }e\\
v \stackrel{e}{\to} v'
     \end{array}
} w(e) . 
\end{displaymath}
then

 $\bullet$ $\dive(v) = k$ for every black vertex $v$.
 
 $\bullet$ $\dive(v) = 0$ for every gray vertex $v$.

\end{enumerate}

\begin{example}
For didactical reasons, we   
interject an example in this long series of definitions.
Figure \ref{fig:floordiagram1} shows a floor diagram for $\F_2$. We draw dotted lines to separate $L$, $C$, and $R$. 
To simplify our pictures, we omit the labels of the vertices in $L$
and $R$. For instance, the picture of Figure \ref{fig:floordiagram1}
 actually represents
three different floor diagrams, corresponding to the three
different ways of assigning 
the labels $q_1,q_2$, and $q_3$ to the vertices in $L$. 
When an edge $e$ has weight $w(e)>1$, we write that weight next to it. Since there is no risk of confusion, we omit the (left-to-right) edge directions. Notice that $\dive(v)=2$ for every black vertex and $\dive(v) = 0$ for all the gray vertices. 
\end{example}
 
\begin{center}
\begin{figure}[h]
\begin{tabular}{ccc}
 \includegraphics[height=2cm]{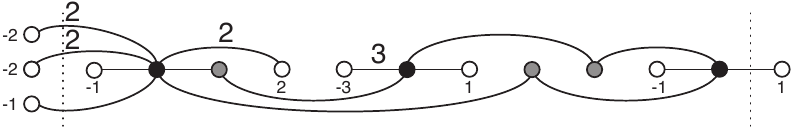}
\end{tabular}
\caption{A floor diagram for the Hirzebruch surface $\F_2$.}
\label{fig:floordiagram1}
\end{figure}
\end{center}

We associate several parameters to a marked floor diagram $\D$:

\smallskip

\noindent
$\circ$ 
The \emph{type} of $\D$ is the pair $(n_1, n_2)$ where 
$\D$ has $n_1$
white vertices in $L\cup R$ (i.e. $n_1=l+r$), 
and $n_2$ white vertices in $C$.

\noindent 
$\circ$ 
 The \emph{divergence sequence} is a vector 
$(\xx, \yy) \in \ZZ^{n_1} \times \ZZ^{n_2}$ of length $n_1 + n_2$ where

$\bullet$ $\xx=(\dive(q_1),\ldots,\dive(q_{l}),
\dive(\widetilde q_1),\ldots, \dive(\widetilde q_{r}))$ is the sequence of 
divergences  of 
vertices in $L$ and $R$;

$\bullet$ $\yy$ is the sequence of  divergences of white vertices in $C$, listed from left to right.

\noindent
Since the sum of all the divergences in the graph must be $0$, 
we must have
\[
\sum x_i + \sum y_j = -ka
\]
where $a$ is the number of black vertices of $\D$.
The vector $\xx$ is called the \emph{left-right sequence} of $\D$.

\item $\circ$
The \emph{divergence multiplicity vector} is a sequence of four vectors $(\alpha(\xx), \beta(\yy), \widetilde \alpha(\xx), \widetilde \beta(\yy))$ where

 $\bullet$  $\alpha_i$ is the number of (white) vertices $v$ in $L$ with $\dive(v) = -i$;

 $\bullet$  $\widetilde \alpha_i$ is the number of (white) vertices $v$ in $R$ with $\dive(v) = i$;

 $\bullet$  $\beta_i$ is the number of white vertices $v$ in $C$ with $\dive(v) = -i$;

 $\bullet$  $\widetilde \beta_i$ is the number of white vertices $w$ in $C$ with $\dive(v) = i$.

\noindent Clearly $(\xx, \yy)$ determines $(\alpha, \beta, \widetilde \alpha, \widetilde \beta)$ completely, while $(\alpha, \beta, \widetilde \alpha, \widetilde \beta)$ determines $(\xx,\yy)$ up to the order of the coordinates in $\xx$ and $\yy$.

\noindent $\circ$ 
The \emph{bidegree} of $\D$ is the pair $(a,b)$ of positive integers such that 
\[
\sum_i i(\widetilde \alpha_i + \widetilde \beta_i) = b, \qquad \sum_i i(\alpha_i + \beta_i) = ka+b.
\]
Recall that $a$ is the number of black vertices of $\D$.

\noindent $\circ$ 
The \emph{genus} $g(\D)$ of $\D$ is its first Betti number; it equals  $g(\D)=1 - |V| + |E|$.

\noindent 
$\circ$ 
The \emph{multiplicity}  $\mu(\D)$ is the product of the internal edge weights, where an edge is \emph{internal} if it connects two vertices of $C$.

\end{definition}

\begin{example}
Suppose that the vertices of the floor diagram for
$\F_2$ in Figure \ref{fig:floordiagram1} are, from the top to the
bottom, $q_1, q_2$, and $q_3$. Then
its divergence sequence
is
\[
((-2,-2,-1,1),(-1,2,-3,1,-1)).
\]
The divergences of the white vertices in $L$ and $R$ are $-2,-2,-1$ and $1$; they are encoded, respectively, in the vectors $\alpha=12,  \, \widetilde \alpha=1.$
The divergences of the white vertices in $C$ are $-1, -1, -3$, and $1,2$; they are respectively encoded in the vectors $\beta=201,  \, \widetilde\beta=11$.
Therefore the divergence multiplicity vector is
\[
(\alpha,\widetilde \alpha, \beta, \widetilde\beta) = (12, 1, 201, 11).
\]
The negative white divergences add up to $-\sum_i i(\alpha_i+\beta_i)
= -10 = -(2a+b)$ and the positive white divergences add up to $\sum_i
i(\widetilde\alpha_i + \widetilde\beta_i) = 4 = b$. Therefore $\D$ has
bidegree $(a,b) = (3,4)$. Visibly, the genus is $g(\D)=1$ and the
multiplicity is $\mu(\D) = 6$.
\end{example}

In Definition \ref{def:fd}, we use the adjective \emph{marked} in
reference to the corresponding objects in {\cite{Br7,FM,ArdBlo}},
which are \emph{floor diagrams} endowed with additional
structure. However, since we do not consider unmarked floor diagram in this
note, we will abbreviate \emph{marked floor diagram} to  \emph{floor
  diagram} in the rest of the text.

The following theorem is a very minor variation on {\cite[Theorem 3.6]{Br6b}}.
It replaces
the enumeration of algebraic curves by the enumeration of floor diagrams.
\begin{theorem}\label{th:BM}
Let $a>0$ and $b,k,g\ge 0$ be
   four  
integer numbers, and $\xx$ a vector with coordinates in $\ZZ\setminus\{0\}$. 
We write $\alpha(\xx)=\alpha$ and
$\widetilde\alpha(\xx)=\widetilde\alpha$.
Then for any two sequences of
non-negative
integer numbers
$\beta=(\beta_i)_{i\ge 1}$ and $\widetilde \beta
=(\widetilde \beta_i)_{i\ge 1} $
  such that
$$\sum_i i (\alpha_i+\beta_i)=ak+b,  \quad 
\text{and}\quad 
\sum_i i (\widetilde \alpha_i+\widetilde \beta_i)=b, \quad
$$
one has
\begin{displaymath}
 N^{\alpha,\beta,\widetilde \alpha,\widetilde
  \beta}_g(a,b,k)= \sum_\D \mu(\D) ,
\end{displaymath}
where the sum runs over all  floor diagrams $\D$  of bidegree $(a,b)$,
genus $g$, left-right sequence $\xx$, and
divergence multiplicity vector
$(\alpha,\beta,\widetilde\alpha,\widetilde\beta)$  for $\F_k$.
\end{theorem}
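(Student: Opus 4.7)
Since the statement is advertised as a very minor variation on \cite[Theorem 3.6]{Br6b}, the plan is to adapt that argument rather than re-derive it from scratch. The core idea is to degenerate $\F_k$ along its ruling: fix a pencil of fibers of $\F_k \to \CC P^1$ and specialize each of the $l$ free points $p_1, \ldots, p_l$ to distinct generic fibers $F_1, \ldots, F_l$ of this pencil, while keeping the marked points $q^i_j \in B_k$ and $\widetilde q^i_j \in E_k$ fixed. I would then degenerate $\F_k$ by iteratively blowing up these fibers in a one-parameter family, producing a central fiber consisting of $\F_k$ glued to a chain of copies of the compactified normal bundle of a fiber (each isomorphic to $\CC P^1\times\CC P^1$), with exactly one point $p_i$ specialized onto each bubble.

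Next, I would apply the relative Gromov--Witten degeneration formula \cite{IP00,LiRu01,Li02} to describe the limits of the curves being enumerated. Each limit decomposes into irreducible components distributed over the pieces of the central fiber: components in the central $\F_k$ are floors of bidegree $(1,*)$ and correspond to the black vertices of $\D$; rational curves inside the $\CC P^1 \times \CC P^1$ bubbles become the edges, with weights read off from their intersection multiplicities with the corresponding collapsed fiber; prescribed intersection points on $B_k$ and $E_k$ become the white vertices of $L$ and $R$ respectively; free intersections of a floor with $B_k \cup E_k$ become the white vertices of $C$; and the remaining nodes sitting on an internal fiber of the central $\F_k$ become the gray vertices. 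The divergence condition $\dive(v)=k$ at each black vertex then encodes the intersection numbers of a bidegree $(1,*)$ curve with $B_k$ and $E_k$, while the condition $\dive(v)=0$ at gray vertices expresses conservation of intersection multiplicity across a node.

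Finally, I would verify the multiplicity. By the degeneration formula, the contribution of each limit configuration factors as a product of local gluing multiplicities, giving a factor $w(e)$ for each internal edge of weight $w(e)$ and trivial factors elsewhere; the product yields exactly $\mu(\D)$. The main obstacle, and really the only genuine difference with \cite[Theorem 3.6]{Br6b}, is keeping track of intersections with the second section $E_k$ in addition to $B_k$. Because $B_k$ and $E_k$ play symmetric roles with respect to the ruling and are disjoint in $\F_k$, the argument tracking intersections with $E_k$ is formally identical to the one for $B_k$, and simply adds the white vertices of $R$ and the $\widetilde\beta$-part of $C$ to the combinatorial picture. The remaining genericity and transversality statements — that no further limit components arise and that no non-reduced or non-generic configuration contributes — carry over verbatim from \cite{Br6b}, which completes the reduction to the floor diagram count.
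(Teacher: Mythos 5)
Your proposal degenerates $\F_k$ in the wrong direction, and this is a genuine gap rather than a cosmetic difference. You bubble off compactified normal bundles of a \emph{fiber}, producing a chain of copies of $\F_0=\CC P^1\times\CC P^1$ glued to $\F_k$ along fibers. But the floor decomposition underlying Definition \ref{def:fd} comes from degenerating along the \emph{sections}: the paper degenerates $\F_k$ into a chain $Z=\bigcup_{i=0}^{l+1}Y_i$ of $l+2$ copies of $\F_k$ (the compactified normal bundle of $E_k$ is again $\F_k$), with $Y_i\cap Y_{i+1}$ equal to $E_k$ in $Y_i$ and $B_k$ in $Y_{i+1}$, specializing exactly one point $p_i$ to each intermediate level $Y_i$, the $q^i_j$ to $Y_0$, and the $\widetilde q^i_j$ to $Y_{l+1}$. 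It is this choice that forces every limit component to have bidegree $(1,b_i)$ or $(0,b_i)$ --- each level contains a single constraining point, so it can support at most one floor --- and that produces the left-to-right structure of the diagram: the order of $C$ records the order of the levels $Y_1,\dots,Y_l$, with $L$ at the $B_k$-end and $R$ at the $E_k$-end. In your fiber degeneration, both $B_k$ and $E_k$ survive as sections meeting \emph{every} component of the chain, so nothing separates the $\alpha$-data from the $\widetilde\alpha$-data, nothing induces the ordering of $C$, and there is no mechanism splitting the curve in the central $\F_k$ (which would have to carry essentially the whole class $aB_k+bF_k$) into bidegree $(1,*)$ pieces. A fiber degeneration corresponds to stretching the picture in the base direction and simply does not yield the floor diagrams being counted.

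Your dictionary is also inconsistent with the definitions and with your own setup. In a floor diagram, each black, gray, and $C$-white vertex carries exactly one of the $l$ points $p_i$ (indeed $a+(g+a-1)+n_2=l$ by Lemma \ref{lemma:VEcount}): gray vertices are points $p_i$ lying on chains of bidegree $(0,b_i)$ components joining two floors --- not ``nodes sitting on an internal fiber'' --- and $C$-white vertices are points $p_i$ on chains joining a floor to $B_k$ in $Y_0$ or $E_k$ in $Y_{l+1}$, not ``free intersections of a floor with $B_k\cup E_k$.'' In your configuration the floors all lie in the central $\F_k$, which contains no $p_i$ at all, so black vertices cannot be matched with marked points as the combinatorics requires. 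Finally, note that the paper does not re-derive the transversality and multiplicity statements from a degeneration formula: it deduces the theorem from the tropical proof of \cite[Theorem 3.6]{Br6b}, upgraded to tangency conditions via Shustin's correspondence theorem \cite{Shu9}, or alternatively by adapting the classical-geometry argument of \cite[Section 5.2]{Bru14}; the degeneration along sections is only recalled as the dictionary between limit curves and diagrams. So your closing claim that the genericity and multiplicity analysis ``carries over verbatim'' is resting on a degeneration that is not the one those references analyze; to repair the proof, set up the chain along $B_k/E_k$ with one $p_i$ per level, and then the identification of $\mu(\D)$ with the number of degenerating curves can legitimately be quoted from \cite{Bru14}.
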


\begin{proof}
Strictly speaking, the tropical proof of {\cite[Theorem 3.6]{Br6b}} uses 
Mikhalkin's Correspondence Theorem {\cite[Theorem 1]{Mik1}}, and 
proves our
theorem only when 
$$\alpha=\widetilde \alpha=0,\quad \beta=(ka+b,0,\ldots,0) ,
\quad\mbox{and}\quad 
\widetilde\beta= (b,0,\ldots,0).$$
A generalization of Mikhalkin's Correspondence Theorem which
covers the case of
 curves satisfying tangency conditions with toric divisors
can be found for example in  {\cite[Theorem 2]{Shu9}}. 
In particular, the generalization of the  proof of
{\cite[Theorem 3.6]{Br6b}} to our situation is straightforward.
Alternatively, a proof  of Theorem \ref{th:BM} within classical algebraic
geometry can be obtained by a straightforward adaptation of
{\cite[Section 5.2]{Bru14}}.

For the reader's convenience, we briefly recall the correspondence
between  floor diagrams and  algebraic curves following
 \cite{Bru14}.
We use
notation from Section \ref{sec:double GW}.
As explained in the introduction, we may degenerate $\F_k$ into a
chain   $Z=\bigcup_{i=0}^{l+1}Y_i$ 
of $l+2$ surfaces all isomorphic to $\F_k$, where two consecutive
components $Y_i$ and $Y_{i+1}$
intersect along $E_k$ in $Y_i$  and $B_k$ in $Y_{i+1}$. 
In this degeneration of $\F_k$, 
we specialize  the  point  $p_i$ of $\omega$ to $Y_i$, the points
$q_i^j$ to $Y_0$, and the points $\widetilde q_i^j$ to $Y_{l+1}$.
Let $X$ be a limit in $Z$ of curves under enumeration.
It turns out that 
any irreducible component of  $X$  has either
bidegree $(1,b_i)$ or $(0,b_i)$, and 
 we associate a floor diagram to $X$ as follows: 
\begin{itemize}
\item black vertices correspond to points $p_i$ lying on an irreducible
component of $X$  of bidegree $(1,b_i)$; 
\item grey vertices 
correspond to points $p_i$ lying on a chain of irreducible
components of $X$ of bidegree $(0,b_i)$ 
 connecting two
irreducible components of $X$ of bidegree $(1,b_r)$ and $(1,b_s)$; the weight of the two
corresponding edges is $b_i$;
\item white vertices not in $L\cup R$ 
correspond to points $p_i$ lying on a chain of irreducible
components of $X$ of bidegree $(0,b_i)$ connecting an
irreducible component of $X$ of bidegree $(1,b_r)$ to either 
$B_k$ in $Y_0$ or $E_k$ in $Y_{l+1}$; the weight of the 
corresponding edge is $b_i$;
\item  white
vertices in $L$ (resp. $R$) correspond to points $q_i^j$
(resp. $\widetilde q_i^j$). 
\end{itemize}
The multiplicity of the obtained floor diagram is precisely the
number of curves under enumeration that degenerate to $X$ when $\F_k$
degenerates to $Z$.
\end{proof}

\begin{example}
Figure \ref{fig:floordiagram2}a represents the only
 floor diagram in $\F_1$ of
bidegree $(2,0)$, genus 0, and 
divergence multiplicity vector
$(0,01,0,0)$. 
Hence
$N^{0,01,0,0}_0(2,0,1)=2$
according to Theorem
\ref{th:BM}.
\end{example}

\begin{example}
Figure \ref{fig:floordiagram2}b represents the only
 floor diagram in $\F_1$ of
bidegree $(2,0)$, genus 0, and 
divergence multiplicity vector
$(01,0,0,0)$. 
By Theorem
\ref{th:BM}, we have
$N^{01,0,0,0}_0(2,0,1)=1$.
\end{example}

\begin{example}
Figure \ref{fig:floordiagram2}c represents the only
 floor diagram in $\F_0$ of
bidegree $(2,2)$, genus 0, and 
divergence multiplicity vector
$(01,0,0,01)$. 
Therefore by Theorem
\ref{th:BM}, we have 
$N_0^{01,0,0,01}(2,2,0)=8$.
\end{example}

\begin{center}
\begin{figure}[h]
\begin{tabular}{ccccc}
 \includegraphics[height=1.2cm]{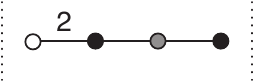} & \hspace{6ex} &
 \includegraphics[height=1.2cm]{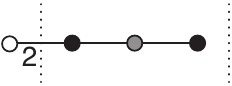} &  \hspace{6ex} &
\includegraphics[height=1.2cm]{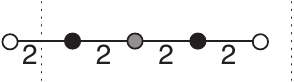}
\\ a) $\mu(\D)=2$ &&b) $\mu(\D)=1$ && c)  $\mu(\D)=8$
\end{tabular}
\caption{Computation of  $N^{0,01,0,0}_0(2,0,1)=2$, $N^{01,0,0,0}_0(2,0,1)=1$ and $N^{01,0,0,01}_0(2,2,0)=8$.}
\label{fig:floordiagram2}
\end{figure}
\end{center}

\begin{example}
Figure \ref{fig:floordiagram3}
represents all
 floor diagrams in $\F_1$ of
bidegree $(3,1)$, genus 0, and 
divergence multiplicity vector
$(01,1,1,0)$. 
By Theorem
\ref{th:BM}, we have
$$N^{(01,1,1,0)}_0(3,1,1)=1+1+1+4+1=8.$$
\end{example}

\begin{center}
\begin{figure}[h]
\begin{tabular}{ccccc}
 \includegraphics[height=1.1cm]{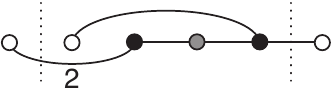} & \hspace{1ex} &
 \includegraphics[height=1.1cm]{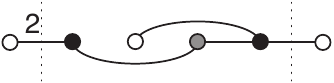} & \hspace{1ex} &
\includegraphics[height=1.1cm]{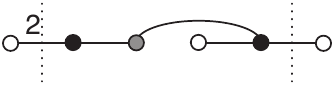}
\\  a) $\mu(\D) = 1$ && b) $\mu(\D)=1$ && c)  $\mu(\D)=1$
\end{tabular}
\begin{tabular}{ccc}
\\ \\ \includegraphics[height=1.1cm]{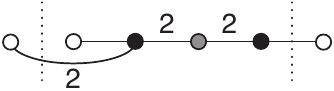} & \hspace{6ex} &
\includegraphics[height=1.1cm]{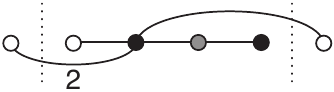}
\\ d) $\mu(\D)=4$ && e)  $\mu(\D)=1$
\end{tabular}
\caption{Computation of $N^{(01,1,1,0)}_0(3,1,1)=8$.}
\label{fig:floordiagram3}
\end{figure}
\end{center}

\section{Weighted partition functions and weighted Ehrhart reciprocity}
\label{sec:partition}
In this section we collect a few results about partition functions and their weighted counterparts that will be useful to us in what follows.

Let $X=\{a_1, \ldots, a_m\} \subset \ZZ^d$ be a finite multiset 
of lattice vectors in $\RR^d$. The \emph{rank} $r(X)$ of $X$ is the dimension of the real span of $X$. We may regard $X$ as an $m \times d$ matrix whose columns are $a_1, \ldots, a_m$. We say $X$ is \emph{unimodular} if all the maximal minors are equal to $-1, 0,$ or $1$.

The \emph{cone} of $X$ is $\cone(X) = \{\sum t_ia_i\, | \, t_i \geq
0\}$. We will assume that $X$  is \emph{pointed}; \emph{i.e.}, $\cone(X)$ does not contain a nontrivial linear subspace. This is equivalent to requiring that $X$ lies in some open half-space of $\RR^d$.

\subsection{Weighted partition functions}
If $X=\{a_1, \ldots, a_m\} \subset \ZZ^d$ is a pointed vector configuration, we define the \emph{partition function} $\P_X: \ZZ^d \rightarrow \ZZ$ to be
\[
\P_X(c) = ( \textrm{number of ways of writing } c = \sum_{i=1}^m c_ia_i \textrm{ with } c_i \in \NN).
\]
Equivalently, $\P_X(c)$ is the number of integer points in the polytope:
\[
P_X(c) = \{z \in \RR^m \, | \, Xz=c, z \geq 0\}.
\]

More generally, if 
$f \in \RR[z_1, \ldots, z_m]$ 
is a polynomial function, we define the \emph{weighted partition function}
\[
\P_{X,f}(c) = \sum_{z \in P_X(c) \cap \ZZ^m} f(z).
\]
We may think of this as a ``discrete integral" of the function $f$ over the polytope $P_X(c)$.
When $f=1$ we recover the ordinary partition function.

We wish to know how the polytope $P_X(c)$ and the weighted partition function $\P_{X,f}(c)$ vary for fixed $X$ and variable $c$. As $c$ varies, the defining hyperplanes of the polytope move, but their facet directions stay fixed. Naturally, as we push hyperplanes past vertices, we may change the combinatorial shape of the polytope. However, when the combinatorial shape is fixed, it is reasonable to expect that the discrete integral $\P_{X,f}(c)$ of a polynomial $f$ should change predictably. We now make this precise.

The \emph{chamber complex} $\Ch(X)$ of $X$ is a polyhedral complex supported on $\cone(X)$. It is given by the common refinement of all the cones spanned by subsets of $X$.

A function $f: \ZZ^n \rightarrow \RR$ is \emph{quasipolynomial} if there exists a sublattice $\Lambda \subseteq \ZZ^n$ of full rank and polynomials $f_1, \ldots, f_N$ corresponding to the different cosets $\Lambda_1, \ldots, \Lambda_N$ of $\Lambda$ such that $f(v) = f_i(v)$ for every $v \in \Lambda_i$. 
A function $f: \ZZ^n \rightarrow \RR$ is \emph{piecewise
  quasipolynomial 
relative to $\Ch(X)$} if the restriction of $f$ to any given face $F$ of the chamber complex $\Ch(X)$ is equal to a quasipolynomial function $f^F$ depending on $F$.

\begin{theorem}\cite{Bla, Stu95}\label{th:sturmfels}
For any pointed vector configuration $X \subset \ZZ^d$, the partition
function $\P_{X}$  is piecewise quasipolynomial 
relative to 
the chamber complex $\Ch(X)$. Furthermore, if $X$ is unimodular, then $\P_X$ is piecewise polynomial. The polynomial pieces of $\P_X$ have degree $|X| - r(X)$.
\end{theorem}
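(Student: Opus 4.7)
The plan is to exploit the polytopal interpretation $\P_X(c) = \#(P_X(c) \cap \ZZ^m)$ together with the observation that, for $c$ varying within a chamber of $\Ch(X)$, the combinatorial structure of $P_X(c)$ stays constant and the polytope varies linearly with $c$. First I would make the structural observation precise: the vertices of $P_X(c)$ are the lattice points $z_B \in \RR^m$ obtained from expressing $c = \sum_{a_i \in B} z_i\, a_i$ in a basis $B \subseteq X$ (with $z_j = 0$ for $a_j \notin B$), provided $c \in \cone(B)$. Within a chamber $\sigma$ of $\Ch(X)$ the set of bases $B$ with $c \in \cone(B)$ is constant, so the face lattice of $P_X(c)$ is the same for all $c \in \sigma$, and each vertex depends linearly on $c$.

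Next I would prove the piecewise quasipolynomiality. The cleanest route is to use the generating function
\[
\sum_{c \in \ZZ^d} \P_X(c)\, t^c = \prod_{i=1}^m \frac{1}{1-t^{a_i}},
\]
and apply Brion's decomposition (or equivalently, iterated partial fractions) to express it chamberwise as a sum of simpler rational generating functions supported on simplicial cones spanned by bases $B \subseteq X$. Each simplicial summand contributes a quasipolynomial in $c$, whose period divides the index of the sublattice $\ZZ B \subseteq \ZZ^d$; when $X$ is unimodular every such index equals $1$, so every summand is already polynomial, and the sum is polynomial on each chamber. Alternatively, one can triangulate $\cone(X)$ using a coherent refinement compatible with $\Ch(X)$, and count lattice points in each piece by Ehrhart's theorem applied to the polytope $P_X(c) \cap (\text{fiber simplex})$, whose Ehrhart quasipolynomial becomes honestly polynomial under the unimodularity hypothesis since all vertices are lattice points.

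For the degree statement, I would argue dimensionally: on a top-dimensional chamber $\sigma$, the fiber polytope $P_X(c)$ has dimension $m - r(X) = |X| - r(X)$, and as $c$ scales by a factor $N$ inside $\sigma$ the polytope $P_X(Nc)$ dilates by $N$, so Ehrhart's theorem forces the leading quasipolynomial piece to have degree exactly $|X| - r(X)$. On lower-dimensional faces of $\Ch(X)$ the degree can only drop, but the theorem's claim is about the maximal possible degree, which is achieved on the top chambers.

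The main obstacle in carrying this out carefully is the bookkeeping in Brion's (or the iterated residue) decomposition: one must check that the simplicial summands associated to each chamber actually assemble into a single quasipolynomial on $\sigma$ (rather than only on smaller regions), and that the period behavior is controlled by the indices of the lattices $\ZZ B$. This is where unimodularity feeds in cleanly, and where I would cite \cite{Bla, Stu95} rather than re-derive the full decomposition, since the combinatorial content of interest for the rest of the paper is the chamber structure and the degree bound, both of which follow from the polytopal picture above.
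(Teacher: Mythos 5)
You should first know that the paper contains no proof of Theorem \ref{th:sturmfels}: it is imported wholesale from \cite{Bla, Stu95}, and the only nearby argument, the proof of Theorem \ref{th:pp}, uses it as a black box (reducing the weighted case to it by inclusion--exclusion over $T \subseteq Y$). So the comparison must be made against the cited sources, and measured that way your sketch is essentially the standard argument: basic feasible solutions of $\{z \in \RR^m : Xz = c,\, z \geq 0\}$ are indexed by bases $B \subseteq X$ with $c \in \cone(B)$, that index set is constant on each chamber of $\Ch(X)$, the generating function $\prod_i (1-t^{a_i})^{-1}$ decomposes chamberwise into simplicial contributions whose periods divide the indices of the lattices $\ZZ B$, and unimodularity trivializes all these indices --- this is precisely the route of \cite{Stu95}, with the quasipolynomiality going back to \cite{Bla}. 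Your degree argument is also sound, for a reason worth making explicit: the chambers are \emph{cones}, so $Nc$ stays in the chamber of $c$ and $P_X(Nc) = N\,P_X(c)$ exactly; Ehrhart's theorem then applies with leading coefficient the positive relative volume of the fiber polytope, giving degree exactly $|X| - r(X)$ on full-dimensional chambers, and since a full-dimensional cone contains generic rays, the total degree of the multivariate piece equals the degree along such a ray. Three small repairs: the vertices $z_B$ are rational, not lattice points in general (they are integral exactly when $X$ is unimodular and $c \in \ZZ^d$, which is what makes $P_X(c)$ a lattice polytope in the second statement); the period of a simplicial summand should be measured against the lattice $\ZZ^d \cap \RR B$ rather than $\ZZ^d$ when $r(X) < d$; and your alternative route via ``Ehrhart applied to $P_X(c) \cap (\text{fiber simplex})$'' is vaguer than it looks, since one-parameter Ehrhart dilation does not by itself give polynomiality in the multivariate parameter $c$ across a chamber --- one needs a parametric version, so the generating-function route is the one to carry out. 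The genuinely hard step --- that the simplicial summands assemble into a single quasipolynomial per chamber rather than on a finer subdivision --- you flag honestly and outsource to \cite{Bla, Stu95}; given that the paper itself states the theorem with exactly this citation and no proof, that deferral is appropriate, and the proposal as a whole is correct.
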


We will need an extension of this result. For each subset $Y \subseteq
X$ let $\pi_Y: \RR^m \rightarrow \RR$ be the function 
$\pi_Y(z_1,\ldots,z_m) = \prod_{i \in Y} z_i$. 
(Here we are identifying the coordinates of $\RR^m$ with the corresponding vectors of $X=\{a_1, \ldots, a_m\}$.)

\begin{theorem}\label{th:pp}
For any pointed vector configuration $X \subset \ZZ^d$ and any subset
$Y \subseteq X$, the weighted partition function $\P_{X, \pi_Y}$  is
piecewise quasipolynomial 
relative to
the chamber complex $\Ch(X)$. Furthermore, if $X$ is unimodular, then $\P_{X, \pi_Y}$ is piecewise polynomial. The polynomial pieces of $\P_X$ have degree $|X|+|Y| - r(X)$.
\end{theorem}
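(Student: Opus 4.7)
The plan is to reduce Theorem \ref{th:pp} to Theorem \ref{th:sturmfels} by a duplication trick. First I would establish the identity
\[
\P_{X, \pi_Y}(c) = \P_{X \uplus Y}(c - a_Y),
\]
where $X \uplus Y$ denotes the multiset $X$ augmented with one additional copy of $a_i$ for each $i \in Y$, and $a_Y := \sum_{i \in Y} a_i$. This comes from a direct substitution: for each $i \in Y$, the weight factor $z_i$ counts the integer values $k_i$ with $1 \le k_i \le z_i$; substituting $z_i = z_i' + k_i$ (with $z_i', k_i \ge 0$) and shifting $k_i \mapsto k_i - 1$ absorbs the constraint $k_i \ge 1$ into a translation of $c$ by $-a_i$. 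Alternatively, one can verify the identity at the level of generating functions using $\sum_{n \ge 0} n t^n = t/(1-t)^2$.

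Next I would verify that $X \uplus Y$ inherits the hypotheses of Theorem \ref{th:sturmfels}. Duplicating columns preserves the cone, so $\cone(X \uplus Y) = \cone(X)$ is still pointed and $r(X \uplus Y) = r(X)$; the subset cones are unchanged, so $\Ch(X \uplus Y) = \Ch(X)$; the size grows to $|X \uplus Y| = |X| + |Y|$; and $X \uplus Y$ is unimodular whenever $X$ is, because every maximal minor of $X \uplus Y$ either repeats a column (and therefore vanishes) or coincides with a maximal minor of $X$. Applying Theorem \ref{th:sturmfels} to $X \uplus Y$ then yields that $\P_{X \uplus Y}$ is piecewise quasipolynomial relative to $\Ch(X)$ with polynomial pieces of degree $|X \uplus Y| - r(X \uplus Y) = |X| + |Y| - r(X)$, and piecewise polynomial when $X$ is unimodular.

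The identity of the first step then transfers this conclusion to $\P_{X, \pi_Y}$, since an integer translation of the argument preserves the piecewise (quasi-)polynomial structure and the degree. The main obstacle is a compatibility issue between two chamber complexes: a priori, the shifted function $c \mapsto \P_{X \uplus Y}(c - a_Y)$ is piecewise quasipolynomial relative to the translated complex $\Ch(X) + a_Y$ rather than $\Ch(X)$ itself. To obtain the conclusion relative to $\Ch(X)$, I would need to argue that within each chamber $\tau$ of $\Ch(X)$, the quasipolynomial pieces coming from the various sub-regions of $\tau$ cut out by $\Ch(X) + a_Y$ agree as functions on $\tau \cap \ZZ^d$. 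This step relies on the standard wall-crossing compatibility of partition-function quasipolynomials, together with the observation that $a_Y$ lies in $\cone(X)$, so the shift moves the chamber structure of $\P_{X \uplus Y}$ inside $\cone(X)$ in a controlled way.
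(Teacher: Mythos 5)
Your first two steps are sound: the identity $\P_{X,\pi_Y}(c) = \P_{X \uplus Y}(c - a_Y)$ is correct (points of $P_X(c)$ with $z_i = 0$ for some $i \in Y$ carry weight zero, so the substitution $z_i = z_i' + k_i$ with $k_i \geq 1$ is a genuine bijection, as your generating-function check confirms), and $X \uplus Y$ does inherit pointedness, rank, chamber complex, and unimodularity from $X$. The genuine gap is exactly where you flag it, and the patch you sketch does not close it. The condition $a_Y \in \cone(X)$ is far too weak: every vector in sight lies in $\cone(X)$, and translating by a lattice vector deep in the cone can cross arbitrarily many walls of $\Ch(X)$, across which the local quasipolynomial of $\P_{X \uplus Y}$ genuinely changes --- there is no ``standard wall-crossing compatibility'' asserting agreement (on the contrary, the whole content of wall-crossing formulas, e.g.\ for double Hurwitz numbers, is that the difference across a wall is nonzero). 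From Theorem \ref{th:sturmfels} alone you only get that $c \mapsto \P_{X \uplus Y}(c - a_Y)$ is piecewise quasipolynomial relative to the translated complex $\Ch(X) + a_Y$, whose walls really do subdivide the chambers of $\Ch(X)$, and nothing in that theorem forces the sub-pieces to glue.

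What closes the gap is a strictly stronger input than Theorem \ref{th:sturmfels}: the Dahmen--Micchelli / De Concini--Procesi theorem that the quasipolynomial attached to a chamber $\tau$ computes $\P_Z(c)$ for all lattice points $c \in \tau - B(Z)$, where $B(Z) = \left\{\sum_i t_i a_i \, : \, 0 \leq t_i \leq 1\right\}$ is the zonotope of $Z$. The relevant membership is in the zonotope, not the cone: since $a_Y = \sum_{i \in Y} a_i \in B(X \uplus Y)$, for $c \in \tau$ one has $c - a_Y \in \tau - B(X \uplus Y)$, so $\P_{X \uplus Y}(c - a_Y)$ is given by the single quasipolynomial of $\tau$, and your transfer of degree and polynomiality then goes through. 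With that citation your argument is complete and arguably slicker than the paper's. Note that the paper avoids needing this stronger theorem by never shifting the argument: instead of forcing $k_i \geq 1$, it decomposes $k_i = c_i + d_i$ with $c_i, d_i \geq 0$, obtaining the unshifted identity $\P_{X \cup Y}(c) = \sum_{T \subseteq Y} \P_{X,\pi_T}(c)$ and inverting it to $\P_{X,\pi_Y}(c) = \sum_{T \subseteq Y} (-1)^{|Y-T|} \P_{X \cup T}(c)$; every term is evaluated at the same $c$, so Theorem \ref{th:sturmfels} applies directly, and the degree claim reads off the $T = Y$ term.
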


\begin{remark}
In fact, Theorem \ref{th:pp} holds for any weighted partition function $\P_{X, f}(c)$ where $f$ is polynomial. This result is known to experts on partition functions, and certainly not surprising in view of Theorem \ref{th:sturmfels}. We give a proof of the special case we need. This proof can be adapted to the general case; 
for details, see \cite{Ard09}.
\end{remark}

\begin{proof}
Consider the multiset $X \cup Y$ obtained by adding to $X$ a second copy of the vectors in $Y$. The partition function $\P_{X \cup Y}(c)$ of  this new configuration $X \cup Y$ counts the ways of writing $c = \sum_{i \in X} c_ia_i + \sum_{i \in Y} d_ia_i$ with $c_i, d_i \in \NN$.

Now note that this is the same as first writing $c = \sum_{i \in X} k_ia_i$ with $k_i \in \NN$, and then writing $k_i = c_i + d_i$ with $c_i, d_i \in \NN$ for each $i \in Y$. For each choice of $(k_1, \ldots, k_m)$ there are $\prod_{i \in Y} (k_i+1)$ of carrying out the second step. Therefore
\[
\P_{X \cup Y}(c) = \sum_{k \in \P_X(c) \cap \ZZ^m} \prod_{i \in Y} (k_i+1) = \sum_{T \subseteq Y} \P_{X, \pi_T}(c).
\]
By the inclusion-exclusion formula, we get
\begin{equation} \label{eq:pp}
\P_{X, \pi_Y}(c) =  \sum_{T \subseteq Y} (-1)^{|Y-T|} \P_{X \cup T}(c). 
\end{equation}
For any $T$, the partition function $\P_{X \cup T}(c)$ is quasipolynomial on each face of the chamber complex of $X \cup T$, which coincides with the chamber complex of $X$. It follows that the weighted partition function $\P_{X, \pi_Y}(c)$  is quasipolynomial on $\Ch(X)$ as well. The second statement also follows immediately.

Finally, the claim about the degree of $\P_{X, \pi_Y}(c)$ follows
immediately from (\ref{eq:pp}). However, it is also useful to give a
more intuitive argument. Regard $\P_{X, \pi_Y}(c)$ as the
\emph{discrete integral} of the function $\pi_Y$ (which is polynomial
of degree $|Y|$) over the polytope $P_X(c)$ (which has dimension $d =
|X| - r(X)$). In each face of the chamber complex, where $P_X(c)$ has
a fixed combinatorial shape, the actual integral $\int_{P_X(c)}
\pi_Y(k) dk$ is   polynomial in $c$ of degree $d+|Y|$ 
by
\cite[Theorem 2.15]{BrVe}.
Now, we can approximate this integral using increasingly fine lattices; it equals
\[
 \lim_{N \rightarrow \infty} \frac1{N^d}
 \sum_{k \in \P_X(c) \cap \left(\frac1N \ZZ\right)^m} \pi_Y(k) =
 \lim_{N \rightarrow \infty}
 \sum_{l \in \P_X(Nc) \cap \ZZ^m} \frac{\pi_Y(l)}{N^{d+|Y|}} =
\lim_{N \rightarrow \infty} \frac{\P_{X, \pi_Y}(Nc)}{N^{d+|Y|}}.
\]
This is only possible if $P_{X, \pi_Y}(c)$ also has degree $d+|Y| = |X|+|Y|-r(X)$, as desired.
\end{proof}

\begin{example}
The motivating example is \emph{Kostant's partition function}
which is defined to be the partition function
$\P_{A_{d-1}}(c)$ of the root system $A_{d-1} = \{e_i -
e_j \, : \, 1 \leq i < j \leq d\}$ 
 where $e_1, \ldots, e_d$ is the canonical
basis of $\RR^d$. 
This function plays a fundamental role in the representation theory of the Lie algebra $\mathfrak{gl}_n$. (More generally, the representation theory of a semisimple Lie algebra  is intimately related to the partition function of the corresponding root system; see \cite{FulHar, Kos} for details.)

It is well known and not difficult to show that $A_{d-1}$ is unimodular. 
In the vector space  $V_d = \{x \in \RR^d \, : \, x_1 + \cdots + x_d = 0\}$, consider the \emph{all-subset hyperplane arrangement} (also known as \emph{discriminant arrangement} ${\mathcal S}_d$ consisting of the following $2^{d-1}-1$ distinct hyperplanes. 
\[
{\mathcal S}_d: \qquad \sum_{i \in S} x_i = 0 \qquad  \qquad (\emptyset \subsetneq S \subsetneq [d]).
\]
Note that $ \sum_{i \in S} x_i = 0$ and $ \sum_{i \in [d]-S} x_i = 0$ are the same
hyperplane. 
The root system $A_{d-1}$ is contained in $V_d$, and the hyperplanes
spanned by roots in $A_{d-1}$ are precisely the hyperplanes in ${\mathcal S}_d$. Therefore the
chamber complex of $A_{d-1}$ in $V_d$ is the restriction of the all-subset arrangement ${\mathcal S}_d$ to $\cone(A_{d-1})$.
\end{example}

\subsection{Weighted Ehrhart reciprocity}


Say a polytope $P \subset \RR^d$ is \emph{rational} if its vertices are rational points, and \emph{integral} if its vertices are lattice points. Let $P^\circ$ be the \emph{relative interior} of $P$; that is, the topological interior of $P$ inside its affine span.

\begin{theorem}[Ehrhart reciprocity \cite{BecRob, Ehr}]\label{th:Ehrh}
  Let $P$ be a rational polytope in $\RR^m$. For each positive integer $n$, let 
\[
L_P(n) = |nP \cap \ZZ^m|, \qquad L_{P^\circ}(n) = |nP^\circ \cap \ZZ^m|.
\]
count the lattice points in the $n$th dilate of $P$ and in its interior, respectively. 
Then $L_P$ and $L_{P^\circ}$ extend to quasipolynomial functions which satisfy
\[
L_{P^\circ}(x) = (-1)^{\dim P} L_P(-x).
\]
Furthermore, if $P$ is a lattice polytope, then $L_P$ and $L_{P^\circ}$ are polynomial.
\end{theorem}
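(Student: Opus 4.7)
The plan is to reduce the statement to the case of a rational simplex via triangulation, and then to extract $L_P$ and $L_{P^\circ}$ as coefficients of an explicit rational generating function obtained from the cone over $P$.

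First I would introduce the \emph{homogenized cone} $K = \cone(P \times \{1\}) \subset \RR^{m+1}$ and consider the multivariate generating function
\[
\sigma_K(x_1,\ldots,x_m,t) = \sum_{(\alpha,n) \in K \cap \ZZ^{m+1}} x^\alpha t^n.
\]
The lattice points of $K$ at height $n$ are in bijection with $nP \cap \ZZ^m$, so setting $x_i=1$ yields the Ehrhart series $\sum_{n\ge 0} L_P(n)\, t^n$. To show this series is the Taylor expansion of a rational function of the form $h^*(t)/(1-t^p)^{d+1}$ with $d=\dim P$, I would triangulate $P$ into rational simplices $\Delta_1,\ldots,\Delta_N$ and combine the series for the simplices via inclusion-exclusion on the face lattice of the triangulation. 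For a rational simplex $\Delta$ with vertices $v_0,\ldots,v_d$, the cone $\cone(\Delta \times \{1\})$ is simplicial, and any lattice point in it can be written uniquely as a nonnegative integer combination of the primitive ray generators plus a lattice point in the half-open fundamental parallelepiped. This immediately gives a rational expression for $\sigma_{\cone(\Delta \times \{1\})}$ with denominator $\prod_{i}(1-t^{p_i})$; extracting coefficients shows that $L_\Delta$ is quasipolynomial of degree $d$, and that the period divides the least common multiple of the $p_i$'s. In particular, if $P$ is an integral polytope then each $p_i=1$ and $L_P$ is polynomial.

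For the reciprocity, the key input is Stanley's reciprocity theorem for simplicial rational cones: for a pointed simplicial rational cone $K$, the generating functions of $K \cap \ZZ^{m+1}$ and $K^\circ \cap \ZZ^{m+1}$ are rational functions related by
\[
\sigma_{K^\circ}(x,t) = (-1)^{\dim K}\, \sigma_K(1/x, 1/t),
\]
interpreted as an identity of rational functions. This identity can be proved directly for a simplicial cone by observing that the lattice points in the open fundamental parallelepiped are the ``complementary'' points to those in the half-open one, i.e.\ $v \mapsto v_0+\cdots+v_d-v$ swaps them. Applying this identity to $K=\cone(P\times\{1\})$, setting $x_i=1$, and reading off coefficients yields
\[
\sum_{n \geq 1} L_{P^\circ}(n)\, t^n = (-1)^{d+1}\sum_{n \geq 1} L_P(-n)\, t^{-n},
\]
where $L_P$ is the quasipolynomial extension. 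The sign $(-1)^{d+1}$ combines with the fact that height-$n$ lattice points of $K^\circ$ sit inside $nP^\circ$ only for $n \geq 1$; this matches a single sign $(-1)^{\dim P}$ after bookkeeping, giving $L_{P^\circ}(x) = (-1)^{\dim P} L_P(-x)$.

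The main obstacle is the inclusion-exclusion step that glues the generating functions of the simplices $\Delta_i$ of a triangulation into that of $P$: boundary simplices and their faces are counted with multiplicities that must be tracked carefully so that interior-versus-closed distinctions for $P$ correspond properly to those of the pieces. The cleanest way around this is to triangulate and apply the above analysis \emph{cone-by-cone}, using additivity of the indicator-function valuation $[K] \mapsto \sigma_K$ (which extends to all rational polyhedra, including their relative interiors, by Lawrence's valuation theorem); this turns the inclusion-exclusion into an identity of rational functions and makes the passage between $P$ and $P^\circ$ automatic, completing the proof in both the polytope and the simplex case simultaneously.
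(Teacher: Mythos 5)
The paper offers no proof of Theorem \ref{th:Ehrh} to compare against: it is invoked as a classical result, cited to \cite{BecRob, Ehr}, and used as a black box only in the proof of the weighted version (Theorem \ref{th:Ehrhart}). Your sketch is, in substance, the standard proof from the cited reference \cite{BecRob}: pass to the homogenization $K=\cone(P\times\{1\})$, reduce to simplicial cones by triangulating, prove Stanley reciprocity for a simplicial rational cone by tiling it with translates of a half-open fundamental parallelepiped, and specialize $x_i=1$ to obtain the Ehrhart series. That is a correct and complete plan, and your instinct about the one genuinely delicate point --- gluing the simplicial pieces --- is right: either your appeal to the valuation property of $Q\mapsto\sigma_Q$ (Lawrence) or, more elementarily, a decomposition of $K$ into pairwise disjoint \emph{half-open} simplicial cones (e.g.\ via a generic shift, which is how \cite{BecRob} handles it) makes both $\sigma_K$ and $\sigma_{K^\circ}$ plain sums with no inclusion--exclusion multiplicities to track.

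Two local corrections, neither fatal. First, in the parallelepiped step the complementation $v\mapsto w_0+\cdots+w_d-v$ (with $w_i$ the primitive ray generators of $K$, i.e.\ the vertices of $P$ lifted to heights $p_i$) is a bijection from the lattice points of $\Pi=\{\sum\lambda_iw_i : 0\le\lambda_i<1\}$ to those of the \emph{opposite half-open} box $\Pi'=\{\sum\lambda_iw_i : 0<\lambda_i\le1\}$, not of the open box; it is $\Pi'$ whose translates tile $K^\circ$, and with ``open'' as you wrote it the bijection fails on boundary faces. Second, your displayed identity conflates the two expansions of one rational function: setting $x=1$ in Stanley reciprocity gives $\sum_{n\ge1}L_{P^\circ}(n)\,t^n=(-1)^{d+1}\sum_{n\ge0}L_P(n)\,t^{-n}$, the left side expanded at $t=0$ and the right at $t=\infty$. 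To convert $t^{-n}$ into $L_P(-n)$ you need the standard lemma that a quasipolynomial $f$ with $F(t)=\sum_{n\ge0}f(n)t^n$ satisfies $\sum_{n\ge1}f(-n)\,t^n=-F(1/t)$ as rational functions; that lemma supplies the extra sign turning $(-1)^{d+1}$ into the final $(-1)^{\dim P}$. As written, $(-1)^{d+1}\sum_{n\ge1}L_P(-n)\,t^{-n}$ is off by exactly this step, and the phrase ``after bookkeeping'' is where the actual content sits, so it should be spelled out.
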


The function $L_P(x)$ is called the \emph{Ehrhart (quasi-)polynomial} of $P$.
We need a weighted version of this result.

\begin{theorem}[Weighted Ehrhart reciprocity]\label{th:Ehrhart} Let $P$ be a rational polytope in $\RR^m$ and $f:\RR^m\to\RR$ be a polynomial function.
 For each positive integer $n$, let 
\[
L_{P,f}(n) = \sum_{z \in nP \cap \ZZ^m} f(z), \qquad L_{P^\circ,f}(n) = \sum_{z \in nP^\circ \cap \ZZ^m} f(z).
\]
Then $L_{P,f}$ and $L_{P^\circ,f}$ extend to quasipolynomial functions which satisfy
\[
L_{-P^\circ, f}(x) = (-1)^{\dim P} L_{P,f}(-x).
\]
Furthermore, if $P$ is a lattice polytope, then $L_P$ and $L_{P^\circ}$ are polynomial.
\end{theorem}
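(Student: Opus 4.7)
The plan is to mimic the inclusion-exclusion argument of Theorem~\ref{th:pp} and thereby reduce weighted Ehrhart reciprocity to the classical version (Theorem~\ref{th:Ehrh}). Since both sides of the claimed identity are linear in $f$, I would first reduce to the case $f(z) = z^{\alpha} = \prod_i z_i^{\alpha_i}$ of a single monomial. By translating $P$ by a suitable lattice vector $w$ and expanding $f(z+nw)$ in the shifted variable, I would further reduce to the case where $P \subset \RR_{\geq 0}^m$ lies in the non-negative orthant; this requires some care, since scaling and translation do not commute, but it reduces to another application of linearity once the coefficients of $f(z+nw)$ are expanded as polynomials in $n$.

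Following the doubling trick of Theorem~\ref{th:pp}, I introduce auxiliary variables $u_{i,j}$ for $1\le i\le m$ and $1\le j\le \alpha_i$, and for each subset $T$ of the index set $\{(i,j)\}$ I define the rational polytope
\[
\widetilde{P}_T = \{(z,u)\in P\times \RR^{|\alpha|-|T|} \,:\, 0\le u_{i,j}\le z_i \text{ for } (i,j)\notin T\} \subset \RR^{m+|\alpha|-|T|}.
\]
Each $\widetilde{P}_T$ is a lattice polytope whenever $P$ is, and has dimension $\dim P+|\alpha|-|T|$. Grouping the lattice points of $n\widetilde{P}_T$ by the underlying $z$-coordinate gives
\[
L_{\widetilde{P}_T}(n)=\sum_{z\in nP\cap\ZZ^m}\prod_{(i,j)\notin T}(z_i+1), \qquad L_{\widetilde{P}_T^\circ}(n)=\sum_{z\in nP^\circ\cap\ZZ^m}\prod_{(i,j)\notin T}(z_i-1).
\]

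I would then combine these via the two algebraic identities
\[
\prod_i z_i^{\alpha_i}=\sum_T(-1)^{|T|}\prod_{(i,j)\notin T}(z_i+1)=\sum_S \prod_{(i,j)\notin S}(z_i-1),
\]
obtained by expanding $z_i=(z_i+1)-1$ and $z_i=(z_i-1)+1$ factorwise. Summing over lattice points yields
\[
L_{P,z^\alpha}(n)=\sum_T (-1)^{|T|} L_{\widetilde{P}_T}(n), \qquad L_{P^\circ,z^\alpha}(n)=\sum_S L_{\widetilde{P}_S^\circ}(n),
\]
from which the quasipolynomiality of $L_{P,f}$ and $L_{P^\circ,f}$, and their polynomiality when $P$ is a lattice polytope, follow at once from Theorem~\ref{th:Ehrh} applied to each $\widetilde{P}_T$. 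For the reciprocity identity, Theorem~\ref{th:Ehrh} also gives $L_{\widetilde{P}_T^\circ}(x)=(-1)^{\dim P+|\alpha|-|T|}L_{\widetilde{P}_T}(-x)$; substituting this into the expression for $L_{P^\circ, z^\alpha}$ and comparing with the expression for $L_{P,z^\alpha}(-x)$ yields $L_{P^\circ,z^\alpha}(x)=(-1)^{\dim P+|\alpha|}L_{P,z^\alpha}(-x)$, which is equivalent to the claim $L_{-P^\circ,f}(x)=(-1)^{\dim P}L_{P,f}(-x)$ because $f(-z)=(-1)^{|\alpha|}f(z)$ for a monomial of total degree $|\alpha|$.

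The main obstacle I anticipate is the bookkeeping: tracking the dimensions and signs through both inclusion-exclusion expansions simultaneously, and carrying out the translation-to-positive-orthant reduction carefully enough that the resulting quasipolynomial identity holds for all $x\in\RR$, not merely at positive integer values. Once those signs are organized, the reciprocity essentially falls out by a single application of classical Ehrhart reciprocity to each auxiliary polytope.
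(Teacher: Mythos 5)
Your argument is correct, and its engine is the same one the paper runs: the factorwise identities $z_i=(z_i+1)-1$ and $z_i=(z_i-1)+1$ convert the weighted sums over $nP$ and $nP^\circ$ into inclusion-exclusion combinations of \emph{unweighted} Ehrhart counts of auxiliary polytopes of dimension $\dim P+|\alpha|-|T|$, after which classical Ehrhart reciprocity (Theorem \ref{th:Ehrh}) plus the parity relation $L_{-P^\circ,z^\alpha}(x)=(-1)^{|\alpha|}L_{P^\circ,z^\alpha}(x)$ finish the job; your sign bookkeeping checks out, since $\sum_T(-1)^{\dim P+|\alpha|-|T|}L_{\widetilde P_T}(-x)=(-1)^{\dim P+|\alpha|}L_{P,z^\alpha}(-x)$. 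Where you genuinely diverge is in scope and packaging. The paper proves only the special case it needs, $P=P_X(c)$ and $f=\pi_Y$ with $Y\subseteq X$ (so a squarefree monomial), and its doubling is algebraic: it duplicates the columns indexed by $T\subseteq Y$ in the vector configuration, so that the auxiliary polytope is the partition polytope $P_{X\cup T}(c)$ -- the splitting $k_i=c_i+d_i$ with $d_i\in\{0,\ldots,k_i\}$ is exactly your box fiber $0\le u_{i,j}\le z_i$ in disguise -- and it defers the general statement to \cite{Ard09}. Your $\widetilde P_T$ is the geometric avatar of that column duplication, and it buys real generality: fibering boxes over an arbitrary rational polytope, using repeated auxiliary variables $u_{i,1},\ldots,u_{i,\alpha_i}$ to reach higher powers $z_i^{\alpha_i}$, and adding the translation step (with the expansion of $f(z+nw)$ in powers of $n$, correctly flagged as the place where dilation and translation fail to commute) yields the theorem in the generality in which it is actually stated, rather than only the flow-polytope instance used in Section \ref{sec:proof}.

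One small point to nail down: your formulas $L_{\widetilde P_T}(n)=\sum_{z\in nP\cap\ZZ^m}\prod_{(i,j)\notin T}(z_i+1)$ and $L_{\widetilde P_T^\circ}(n)=\sum_{z\in nP^\circ\cap\ZZ^m}\prod_{(i,j)\notin T}(z_i-1)$, and the dimension count $\dim\widetilde P_T=\dim P+|\alpha|-|T|$, require $P\not\subset\{z_i=0\}$ for each coordinate $i$ with $\alpha_i>0$; otherwise the box fibers collapse and $\widetilde P_T$ degenerates. Either dispose of that case directly -- if $z_i\equiv 0$ on $P$ and $\alpha_i>0$ then $L_{P,z^\alpha}$ and $L_{P^\circ,z^\alpha}$ vanish identically and reciprocity is trivial -- or strengthen your translation step to place $P$ in the open orthant. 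You should also record the (routine, but used implicitly) convexity fact that the relative interior of $\widetilde P_T$ is $\{(z,u)\,:\,z\in P^\circ,\ 0<u_{i,j}<z_i \text{ for } (i,j)\notin T\}$, which is what makes the interior count come out to $\prod(z_i-1)$, noting that integer points of $nP^\circ$ then satisfy $z_i\ge 1$ so the factors are non-negative.
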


Again, experts in Ehrhart theory will probably not find 
this result surprising or difficult to prove, but we have only
seen it stated explicitly in \cite{Ard09} and (without proof) in \cite{CJM11}.
We will only use it for $P = P_X(c) = \{z \in \RR^m \, : \,
Xz=c, z \geq 0\}$ and $f=\pi_Y$ for $Y \subseteq X$, where $\pi_Y(z)
= \prod_{i \in Y} z_i$, so we will present a proof for this
case. For a proof of the general statement,
see \cite{Ard09}.

\begin{proof}[Proof of Theorem \ref{th:Ehrhart} for $P = P_X(c)$ and $f=\pi_Y$] 
By (\ref{eq:pp})
 we have 
\begin{equation}\label{eq:Ehr1.5}
L_{P_X(c),\pi_Y}(n) = \P_{X,\pi_Y}(nc) = \sum_{T \subseteq Y} (-1)^{|Y-T|} \P_{X\cup T}(nc) =  \sum_{T \subseteq Y} (-1)^{|Y-T|} L_{\P_{X \cup T}(c)}(n)
\end{equation}

Now we need an ``interior" version of (\ref{eq:Ehr1.5}). Let $\P^\circ_X(c)$ denote the number of ways of expressing $c$ as a \textbf{positive} combination of vectors in $X$ that uses all vectors in $X$. This is the number of lattice points in the interior $P_X(c)^\circ$. 
Also let $\P^\circ_{X,f}(c)$ be the sum of $f(y)$ over all $y \in P_X(c)^\circ$

Note that, by the same argument we used to prove (\ref{eq:pp}), we get
\[
\P^\circ_{X \cup Y}(c) = \sum_{k \in \P_X(c)^\circ \cap \ZZ^m} \prod_{i \in Y} (k_i-1) = \sum_{T \subseteq Y} (-1)^{|Y-T|}\P^\circ_{X, \pi_T}(c)
\]
which, using the inclusion-exclusion formula gives
\begin{equation}  \label{eq:Ehr2}
L_{P_X(c)^\circ,\pi_Y}(n) = \P^\circ_{X,\pi_Y}(nc) = \sum_{T \subseteq Y} \P^\circ_{X \cup T}(nc) = \sum_{T \subseteq Y} L_{P_{X \cup T}(c)^\circ}(n).
\end{equation}
To relate (\ref{eq:Ehr1.5}) and (\ref{eq:Ehr2}), notice that Ehrhart reciprocity (Theorem \ref{th:Ehrh}) 
tells us that  
\[
L_{\P_{X \cup T}(c)^\circ}(n) = (-1)^{\dim P_X(c)+|T|}  L_{P_{X \cup T}(c)}(-n)
\]
since $\dim P_{X \cup T}(nc) = \dim P_X(c)+|T|$. Finally it remains to notice that
\[
L_{-P_{X \cup T}(c)^\circ,\pi_Y}(n) = (-1)^{|Y|} L_{P_{X \cup T}(c)^\circ,\pi_Y}(n)
\]
for all natural numbers $n$, and hence for all $n$. 
Combined with (\ref{eq:Ehr1.5}) and (\ref{eq:Ehr2}), this gives the desired result.
\end{proof}

\section{Proofs of Theorems \ref{th:main} and \ref{th:degree}}\label{sec:proof}

Recall the we encode the double Gromov-Witten invariants of the
Hirzebruch surfaces $\F_k$ in the  
function
\[
F_{a,k,g}^{n_1,n_2}(\xx,\yy) = N_g^{\alpha, \beta, \widetilde\alpha, \widetilde\beta}(a,b,k)
\]
where $\alpha_i,  \beta_i$ (resp. $\widetilde\alpha_i, \widetilde\beta_i)$ denote the number of entries of $\xx, \yy$ that are equal to $-i$ (resp. $i$), and $b = \sum i(\widetilde \alpha_i + \widetilde \beta_i)$.
We will need two simple lemmas. 

\begin{lemma}\label{lemma:genus}
The genus of a floor diagram $\D$ is given by $g(\D) = 1-v_B + v_G$, where $v_B$ and $v_G$ are the numbers of black and gray vertices, respectively.
\end{lemma}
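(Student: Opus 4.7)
My plan is to derive the formula directly from the Euler characteristic relation for a connected graph. Since $\D$ is connected by Definition~\ref{def:fd}(3), the genus (by definition, the first Betti number) is given by
\[
g(\D) = 1 - |V| + |E|.
\]
The strategy is then to express both $|V|$ and $|E|$ in terms of $v_B$, $v_G$, and the number $v_W$ of white vertices.

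First I would split $|V| = v_W + v_B + v_G$ according to the color of the vertex.

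Next I would count edges by a degree argument. The conditions in Definition~\ref{def:fd}(3) say that every white vertex has exactly one incident edge which joins it to a black vertex, and every gray vertex has exactly two incident edges, each of which joins it to a black vertex. In particular, every edge with a non-black endpoint must have a black vertex on the other side. I would then invoke the floor-diagram convention of the paper, visible in the worked examples of Figures~\ref{fig:floordiagram2} and \ref{fig:floordiagram3}, that no edge connects two black vertices. Hence every edge has exactly one non-black endpoint, so summing degrees over the white and gray vertices counts each edge of $\D$ exactly once:
\[
|E| \;=\; \sum_{v \text{ white}} 1 \;+\; \sum_{v \text{ gray}} 2 \;=\; v_W + 2 v_G.
\]

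Substituting both counts into the Betti number formula gives
\[
g(\D) \;=\; 1 - (v_W + v_B + v_G) + (v_W + 2 v_G) \;=\; 1 - v_B + v_G,
\]
which is the claim. The only delicate ingredient in the argument is the absence of black--black edges; once this (implicit) property of floor diagrams is accepted, the lemma reduces to a one-line application of the handshake lemma.
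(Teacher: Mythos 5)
Your proof is correct and essentially identical to the paper's: both start from $g(\D)=1-|V|+|E|$ for the connected graph, split $|V|=v_B+v_W+v_G$, and use the degree conditions on white and gray vertices to get $|E|=v_W+2v_G$. Your explicit observation that one must rule out black--black edges is a fair point, but it does not make your route different --- the paper's proof silently makes the same assumption when it writes $|E|=e_{BW}+e_{BG}$.
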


\begin{proof}
The genus of $\D$ is
\[
g(\D) = 1-|V|+|E| = 1-(v_B+v_W+v_G) + (e_{BW}+e_{BG})
\]
where $v_B,v_W,v_G$ denote the number of black, white, and gray vertices, respectively, and $e_{BW}, e_{BG}$ denote the number of black-white and black-gray edges. Since every white vertex has degree $1$ we have $e_{BW}=v_W$. Since every gray vertex has degree $2$, we have $e_{BG} = 2v_G$. Therefore
\[
g(\D) = 1-v_B + v_G
\]
as desired.
\end{proof}

The following lemma is clear from the definitions and Lemma \ref{lemma:genus}.

\begin{lemma}\label{lemma:VEcount}
A floor diagram for $\F_k$ of bidegree $(a,b)$ and type 
$(n_1, n_2)$ 
has:

$\bullet$
$a$ black vertices, $g+a-1$ gray vertices, and $n_1 + n_2$ white vertices.

$\bullet$
$2(g+a-1)$ black-gray edges, and $n_1 + n_2$ black-white edges.
\end{lemma}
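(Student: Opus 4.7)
The plan is to verify each count directly from the definitions, using Lemma \ref{lemma:genus} where needed.

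First I would handle the vertex counts. The count of $a$ black vertices is immediate from the definition of bidegree $(a,b)$, which explicitly states that $a$ equals the number of black vertices of $\D$. The count of $n_1 + n_2$ white vertices follows directly from the definition of type: by definition, $n_1$ counts the white vertices in $L \cup R$ and $n_2$ counts the white vertices in $C$, and all white vertices lie in $L \cup C \cup R$, so their total is $n_1 + n_2$. For the number of gray vertices, I would invoke Lemma \ref{lemma:genus}, which gives $g = 1 - v_B + v_G$; solving for $v_G$ and substituting $v_B = a$ yields $v_G = g + a - 1$.

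Next I would handle the edge counts. The key input is condition (3) in Definition \ref{def:fd}: every white vertex is incident to exactly one edge, which necessarily connects it to a black vertex, and every gray vertex is incident to exactly two edges, each connecting it to a black vertex. Consequently, every edge incident to a white vertex is a black-white edge, and every edge incident to a gray vertex is a black-gray edge (and there are no edges between two black vertices, since the incidence structure forces each edge to touch a white or gray vertex). Therefore, summing incidences, the number of black-white edges equals the number of white vertices, namely $n_1+n_2$; and since each gray vertex contributes two black-gray edge incidences while each such edge contributes exactly one gray vertex incidence, the number of black-gray edges equals $2v_G = 2(g+a-1)$.

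There is essentially no obstacle here: the statement is just a bookkeeping consequence of the degree conditions on white and gray vertices in Definition \ref{def:fd} together with the genus computation of Lemma \ref{lemma:genus}. The only point that deserves slight attention is confirming that no edges of $\D$ are of type black-black, which again follows directly from condition (3), since every edge must be directed from some vertex to another and the only possibilities compatible with the degree constraints on white and gray vertices are black-white and black-gray.
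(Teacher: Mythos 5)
Your proof is correct and takes essentially the same route the paper intends: the paper gives no proof at all, stating only that the lemma ``is clear from the definitions and Lemma \ref{lemma:genus},'' and your bookkeeping (the vertex counts from the definitions of bidegree and type plus $v_G = g+a-1$ from Lemma \ref{lemma:genus}, the edge counts from the degree conditions on white and gray vertices) is exactly that intended argument. One minor remark: Definition \ref{def:fd} does not literally forbid black--black edges, so your claim that condition (3) excludes them is slightly over-asserted, but the paper's own proof of Lemma \ref{lemma:genus} makes the identical implicit assumption when it writes $|E| = e_{BW}+e_{BG}$, so your reading matches the paper's.
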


We are now ready to prove our main results.

\subsection{Proof of Theorem \ref{th:main}}

\begin{reptheorem}{th:main}
\textit{
Let $k,g, n_1, n_2 \ge 0$ and $a \ge 1$ be fixed  integers. 
The function 
\[
F_{a,k,g}^{n_1,n_2}(\xx,\yy)
\]
of double Gromov-Witten invariants of the Hirzebruch surface $\F_k$
is piecewise polynomial 
relative to
the chambers of the 
hyperplane arrangement
\begin{eqnarray*}
\sum_{i \in S} x_i + \sum_{j \in T} y_j + rk= 0 &&  
(S \subseteq [n_1], \,\, T\subseteq [n_2], \,\, 0 \leq r \leq a), \\
y_i-y_j=0 &&  (1 \leq i < j \leq n_2)
\end{eqnarray*}
inside 
$\Lambda=\{(x_1,\ldots,x_{n_1},y_1,\ldots,y_{n_2}) \in \ZZ^{n_1}\times \ZZ^{n_2}
\ | \ \sum x_i +\sum
y_i+ak=0\}\subset
\RR^{n_1}\times \RR^{n_2}.$
}
\end{reptheorem}

\begin{proof}
By Theorem \ref{th:BM}, $F_{a,k,g}(\xx, \yy) = F^{n_1,n_2}_{a,k,g}(\xx, \yy)$ is given by $\sum_\D
\mu(\D)$ as we sum over all floor diagrams $\D$ for $\F_k$ having
bidegree $(a,b)$, genus $g$,  divergence multiplicity vector $(\alpha,
\beta, \widetilde \alpha, \widetilde \beta)$, and left/right sequence $\xx$.
For each such floor diagram $\D$, let $\underline{\D}$ be the \emph{unweighted floor diagram} obtained by removing the weights of $\D$. 
We let the underlying graph $\underline{\D}$ inherit the partition $V = L \sqcup C \sqcup R$ of the vertices, the ordering of $C$, and the coloring of the vertices.
By Lemma \ref{lemma:VEcount}, the collection $\mathcal{G}$ of underlying graphs $\underline{\D}$ that may contribute to $F_{a,k,g}(\xx,\yy)$ is finite in number, and depends only on $g, a,$ and $n_1+n_2$. 

For each graph $G \in \mathcal{G}$, let $W_{G,k}(\xx, \yy)$ be the set of weightings $w: E(G) \rightarrow \ZZ_{>0}$ for which the resulting $\D$ is a floor diagram for $\F_k$ (so in particular every black vertex has divergence $k$ and every gray vertex has divergence $0$) whose white divergence sequence is $(\xx,\yy)$. Note that such a $\D$ automatically has genus $g$ and bidegree $(a,b)$.

The multiplicity of the resulting floor diagram $\D$ is 
$\pi_\inte(w)$, where $\pi_{\inte}: \RR^{E(G)} \rightarrow \RR$ is the
polynomial function defined by 
$\pi_{\inte}(w) = \displaystyle{\prod_{e \textrm{ internal}}} w(e)$. 
Therefore
\begin{equation}\label{eq:Gcontrib}
F_{G,k}(\xx, \yy) = \sum_{w \in W_{G,k}(\xx, \yy)} \pi_{\inte}(w)
\end{equation}
is a contribution of $G$ to $F_{a,k,g}(\xx, \yy)$; but it is not the only one. 
We need to keep in mind that $F_{G,k}(\xx, \yy)$ depends on the order of the entries of $\yy$, while in $F_{a,k,g}(\xx,\yy)$ we need to consider all the distinct orders for $\yy$;
see Theorem \ref{th:BM}. 

It follows that 
\begin{equation}\label{eq:FinGcontribs}
F_{a,k,g}(\xx, \yy) = \frac1{\beta_1! \beta _2!\cdots \widetilde\beta_1! \widetilde\beta_2! \cdots}\
\sum_{G \in \mathcal{G}} \ \sum_{\sigma \in \mathfrak S_{n_2}} F_{G,k}(\xx, \sigma(\yy))
\end{equation}
where $\mathfrak S_{n_2}$ is the set of permutations of a set with
$n_2$ elements.


Now let us study the function $F_{G,k}(\xx,\yy)$ of (\ref{eq:Gcontrib}) more closely.
Before we proceed with the general case, let us discuss the example of Figure  \ref{fig:floordiagramunlabelled}.

\begin{center}
\begin{figure}[h]
\begin{tabular}{ccc}
 \includegraphics[height=2cm]{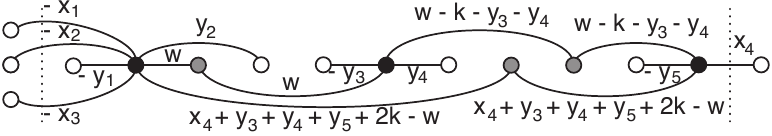}
\end{tabular}
\caption{A floor diagram for the Hirzebruch surface $\F_2$.}
\label{fig:floordiagramunlabelled}
\end{figure}
\end{center}

Consider a weighting giving rise to divergences $(\xx,\yy)$ at the white vertices, $k$ at the black vertices, and $0$ at the gray vertices. The weighting is fully determined by the weight $w$ of the edge from the first black to the first gray vertex, as shown. For this graph $G$ to contribute to 
$F_{a,k,g}(\xx,\yy)$
in the first place, we need
\[
x_1, x_2, x_3, y_1, y_3, y_5 < 0, \qquad x_4, y_2, y_4 >0.
\]
Also, for the weight $w$ to lead to a valid weighting, we need
\[
w>0, \qquad  w-k-y_3-y_4>0 \qquad x_4+y_3+y_4+y_5 + 2k-w>0.
\]
Therefore $F_{G,k}(\xx,\yy)$ equals
\[
 \sum_{w = \max(0, k+y_3+y_4)}^{x_4+y_3+y_4+y_5 + 2k} (-y_1)y_2(-y_3)y_4(-y_5)w^2 (w-k-y_3-y_4)^2 (x_4+y_3+y_4+y_5 + 2k-w)^2.
\]
If we fix the relative order of $0, y_3+y_4+k$ and $x_4+y_3+y_4+y_5 + 2k$, this function is clearly given by a fixed polynomial in $(\xx, \yy)$ for fixed $a$ and $k$. However, this polynomial changes when we change their relative order.

In the general case, the set of weightings $W_{G,k}(\xx,\yy)$ that interests us
is equal to the set of lattice points in a flow polytope. Given a sequence $\dd \in
\RR^V$, the \emph{flow polytope} $\Phi_G(\dd)$ 
is 
\[
\Phi_G(\dd) = \left\{w \in \RR^{E(G)} \, : \, w_e \geq 0 \textrm{ for all edges } e, \dive(e) = \dd_v \textrm{ for all vertices } v\right\},
\]
where we think of $w$ as a vector of flows on the edges of $G$ and, as before, the \emph{divergence} of a vertex is defined to be
\[
 \dive(v) = \sum_{ \tiny
     \begin{array}{c}
  \text{edges }e\\
v' \stackrel{e}{\to} v
     \end{array}
} w_e 
-   \sum_{ \tiny
     \begin{array}{c}
  \text{edges }e\\
v \stackrel{e}{\to} v'
     \end{array}
} w_e. 
\]
The flow polytope may be rewritten in matrix form as
\[
\Phi_G(\dd) = \{w \in \RR^{E(G)} \, : \, A \ww = \dd, \ww \geq 0\},
\]
where $A \in \RR^{V(G) \times E(G)}$ is the \emph{adjacency matrix} of $G$, defined by
\[
A_{v,e} = 
\begin{cases}
1 & \textrm{ when } v' \stackrel{e}{\to} v \textrm{ for some } v' \\
-1 & \textrm{ when } v \stackrel{e}{\to} v' \textrm{ for some } v' \\
0 & \textrm{ otherwise}.
\end{cases}
\]
Then clearly $W_{G,k}(\xx,\yy) = \Phi_G(\dd)$ where the entries of $\dd$ are given by $(\xx,\yy)$ for the white vertices, and are equal to $k$ for the black vertices and $0$ for the gray vertices.

From Theorem \ref{th:pp}, taking into account that the columns of the
adjacency matrix $A$ are a subset of the (unimodular) root system
$A_{|E(G)|-1}$, we obtain that the weighted partition function
\[
\P_{G, \pi_{\inte}}(\dd) = \sum_{w \in \Phi_G(\dd)} \pi_{\inte}(w)
\]
is piecewise polynomial
relative to
the chambers of the all-subset hyperplane arrangement in $\{\dd \in \RR^V \, : \,\sum_i d_i = 0\}$. Recall that this arrangement consists of the hyperplanes $\sum_{i \in V'} d_i = 0$ for all subsets $V' \subseteq V$.

We are only interested in the values of this function $\P_{G, \pi_{\inte}}(\dd)$ on the subspace  determined by the equations
\[
d_u = 0 \textrm{ for all gray } u, \qquad d_v = k \textrm{ for all black } v. 
\]
Since the sum of the divergences is $0$, we have $\sum x_i + \sum y_j +ak=0$ automatically.

The restriction of the weighted partition function $\P_{G, \pi_{\inte}}(\dd)$ to this subspace is the function  $F_{G,k}(\xx,\yy)$ of (\ref{eq:Gcontrib}). It remains piecewise polynomial, and the chamber structure is as stated.
When we symmetrize in (\ref{eq:FinGcontribs}), the result $\sum_{\sigma
  \in \mathfrak S_{n_2}} F_{G,k}(\xx, \sigma(\yy))$ is still piecewise
polynomial 
relative to
the same chambers, since the chamber structure is fixed under permutation of the $n_2$ $\yy$-variables. 
  The desired result follows.
\end{proof}

\subsection{Proof of Theorem \ref{th:degree}}

Having established the piecewise polynomiality of $F_{a,k,g}^{n_1,n_2}(\xx,\yy)$ by framing in terms of lattice point enumeration, we are ready to prove our next theorem. A similar argument was given in \cite{Ard09} and \cite{CJM11} for double Hurwitz numbers. 

\begin{reptheorem}{th:degree}
\textit{
Each polynomial piece of $F_{a,k,g}^{n_1,n_2}(\xx,\yy)$ has degree $n_2+3g+2a-2$, and is either even or odd.
}
\end{reptheorem}

\begin{proof}
In the notation of the proof of Theorem \ref{th:main}, it suffices to show these claims for the following piecewise polynomial function for each graph $G$:
\[
F_{G,k}(\xx,\yy) = \sum_{w \in W_{G,k}(\xx,\yy)} \pi_{\inte}(w)
\]

The degree of the polynomial $\pi_{\inte}(w)$ is the number of interior edges; by Lemma \ref{lemma:VEcount} this is $n_2 + 2(g+a-1)$. In each full-dimensional chamber, the polytope $W_{G,k}(\xx,\yy)$ has dimension $g$; to see this, observe that if we fix the flow on $g$ edges whose removal turns the graph into a tree, the whole flow vector will be uniquely determined. Clearly $g$ is the smallest number with this property.

Repeating the argument at the end of Theorem \ref{th:pp}, it follows that the polynomial pieces of $F_{G,k}(\xx,\yy)$ have degree $g + [ n_2 + 2(g+a-1)]$, 
which proves the second claim.


For the first claim, notice that
\[
F_{G,k}(t\xx, t\yy) = \sum_{w \in tW} \pi_{\inte}(w) = L_{W, \pi_{\inte}}(t)
\]
where we write $W=W_{G,k}(\xx,\yy)$. Therefore if $W^{\circ}$ denotes the relative interior of $W$,
\[
F_{G,k}(-t\xx, -t\yy) = L_{W, \pi_{\inte}}(-t) = (-1)^g L_{-W^\circ, \pi_{\inte}}(t) \\
\]
using weighted Ehrhart reciprocity (Theorem
\ref{th:Ehrhart}). Recalling that the number of internal edges in $G$
is always $i = n_2+2(g+a-1)$, we have 
$\pi_{\inte}(-w) = (-1)^i \pi_{\inte}(w)$ for any $w \in
\RR^E(G)$, 
so we get
\[
F_{G,k}(-t\xx, -t\yy) = (-1)^{g+i} L_{W^\circ, \pi_{\inte}}(t)
\]
and since 
$\pi_{\inte}(w) = 0$ whenever $w$ 
is in the boundary of $W^\circ$ (which is given by equalities of the
form 
$w_e = 0$),
 we get
\[
F_{G,k}(-t\xx, -t\yy) = (-1)^{g+i} L_{W, \pi_{\inte}}(t) = (-1)^{n_2+3g+2a-2} F_{G,k}(t\xx, t\yy).
\]
Therefore, depending on the parity of $n_2+3g+2a-2$, $F_{G,k}(t\xx, t\yy)$ is even or odd.
\end{proof}

\section{Example}\label{sec:example}

We conclude by computing explicitly the functions
$F_{2,k,g}^{2,1}(x_1,x_2,y_1)$ for any Hirzebruch surface $\F_k$ and
any genus $g$. They are listed in Table \ref{table:example}; see the last paragraph of this section
for the conventions used.  
The domain $\Lambda=\{(\xx,\yy) \in \ZZ^2 \times \ZZ \, : \,
x_1+x_2+y_1+2k=0\}$ is 
divided into 16 chambers, inside each one of which the function is polynomial. 
\begin{center}
\begin{figure}[h]
\begin{tabular}{ccc}
 \includegraphics[height=8cm]{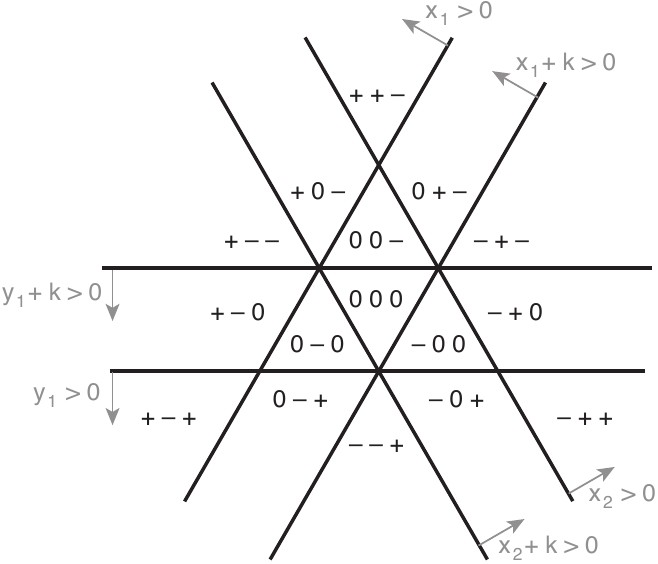}
\end{tabular}
\caption{The chamber complex for $F_{2,k,g}^{2,1}(x_1,x_2,y_1)$.}
\label{fig:chamber}
\end{figure}
\end{center}
These chambers are cut out by the six
planes with equations 
\[
x_1 =0, \quad x_1+k=0, \quad x_2 =0, \quad x_2+k=0, \quad y_1=0,\quad y_1+k=0.
\]
as shown in Figure \ref{fig:chamber}. We label each
 chamber with  a triple $s_1s_2s_3$, where each $s_i$ is $+$, $0$, or $-$ according to whether the corresponding variable is greater than $0$, between $-k$ and $0$, or less than $-k$, respectively. For example, the chamber $+\ 0\ -$ is given by the inequalities
\[
  x_1+k > x_1 > 0, \qquad
  x_2+k > 0 > x_2, \qquad
  0 > y_1+k > y_1.
\]

Since $F_{2,k,g}^{2,1}(x_2,x_1,y_1) = F_{2,k,g}^{2,1}(x_1,x_2,y_1)$,
it is sufficient to compute this function for $x_1 \ge x_2$. For this
reason, we focus on the $10$ chambers intersecting the half plane
$x_1\ge x_2$; the corresponding polynomials are listed in Table \ref{table:example}.
The polynomials on the remaining 6 chambers can be obtained by symmetry.

\medskip

We begin by discussing the case where the genus is $g=0$.
Figure
\ref{fig:graph} shows all graphs that can contribute to 
$F_{2,1,0}^{2,1}(x_1,x_2,y_1)$, 
obtained by 
 a careful but straightforward case-by-case analysis. 
  
\begin{center}
\begin{figure}[h]
\begin{tabular}{ccc}
 \includegraphics[width=5.5in]{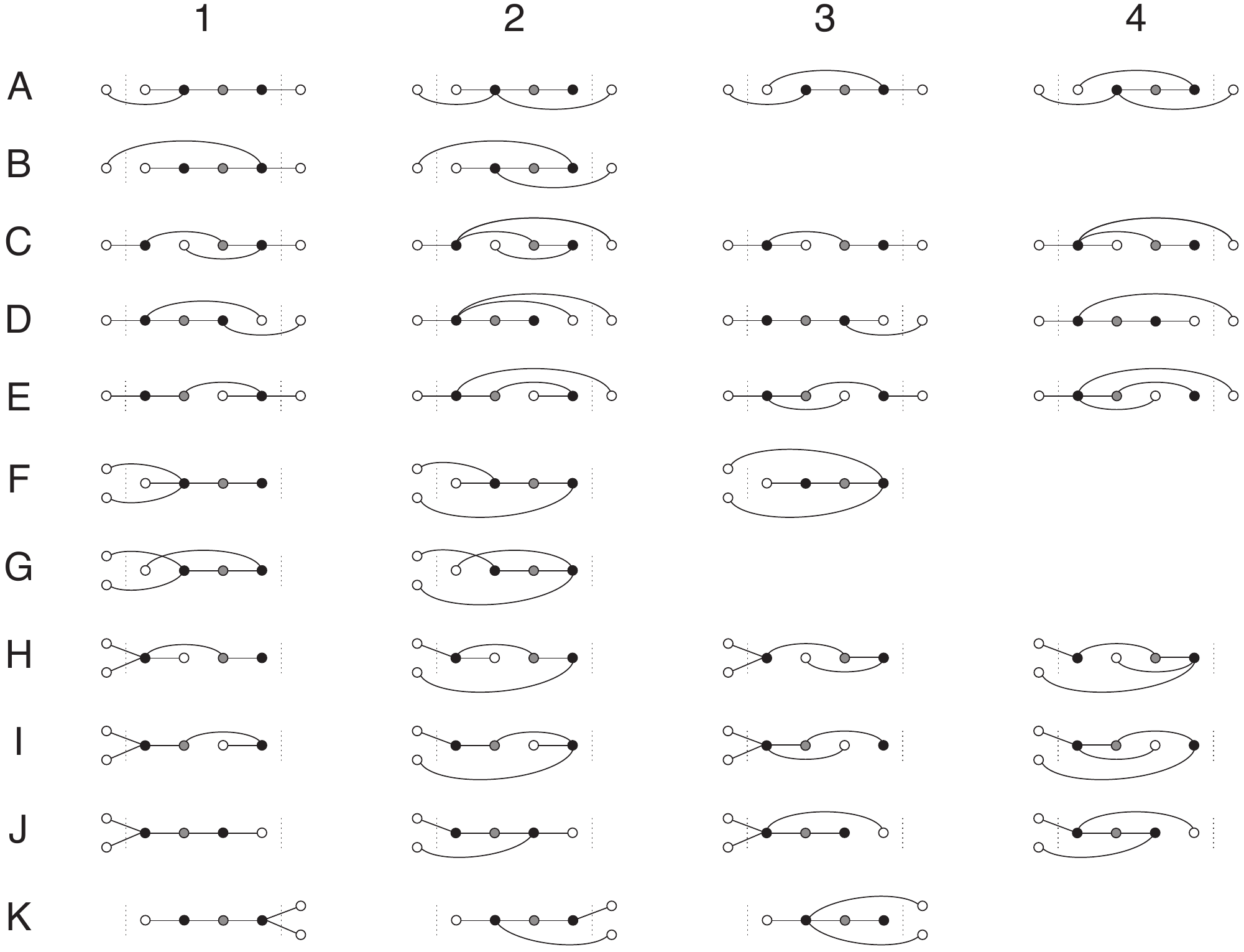}
\end{tabular}
\caption{The 38 graphs that contribute to $F_{2,1,0}^{2,1}(x_1,x_2,y_1)$.} 
\label{fig:graph}
\end{figure}
\end{center}

Each graph contributes in only some of the chambers. Consider, for example, graph $A1$. From left to right, its edge weights must be $-x_1, -y_1, x_2+k, x_2+k, x_2$ so that the vertices will have the correct divergences.
Therefore $A1$ contributes to $F_{2,1,0}^{2,1}(x_1,x_2,y_1)$ with weight $-y_1(x_2+k)^2$ as long as $x_1<0, y_1<0,$ and $x_2+k>0$; that is, in chambers $0+-$, $-+-$, and $-+0$.
Carrying out this computation for all graphs and chambers, we obtain
the polynomials of Table \ref{table:example} when $g=0$, 
with $\Gamma(w)=(w+k)^2$.

Note that for each graph in rows from $F$ to $K$ (i.e. when $x_1$ and
$x_2$ have the same sign),
 there are a priori two
different possibilities of labeling 
the vertices in $L$ or $R$ respectively with
$q_1$ and $q_2$, or $\widetilde q_1$ and $ \widetilde q_2$.
 The two corresponding floor diagrams are 
the same
for graphs
in columns $1$ and $3$, and are different for graphs in columns
$2$ and $4$ (even when $x_1=x_2$, since the corresponding two vertices in $L$ or $R$  are
not adjacent to the same vertex in $C$).

\begin{table}
\[
\begin{array}{|c|c|c|}
\hline
\textrm{Chamber} & \textrm{Graphs } (g=0) &  F_{2,k,g}^{2,1}(x_1,x_2,y_1)/|y_1|  
\\ 
\hline
0\ +- & $A1$, $A2$, $B1$, $B2$   &  \Gamma(x_1) +\Gamma(x_2) + \Gamma(y_1)
+ \Gamma(0)   \\
&&\\
\hline
-+-  &$A1$, $A2$, $A3$,  & (g+3)\Gamma(x_1) +\Gamma(x_2)
+\Gamma(y_1)+\Gamma(0)   
\\ & $B1$, $C1$, $E1$& 
\\
\hline
- + \ 0 &$A1$, $A2$, $A3$, $A4$& (g+3)\Gamma(x_1) +\Gamma(x_2)
+(g+3)\Gamma(y_1)+\Gamma(0) 
\\ &$C1$, $C2$, $E1$, $E2$  & 
\\
\hline
-++&$C3$, $C4$, $D1$, $D2$, &  \Gamma(x_1) +(g+3)\Gamma(x_2)+ \Gamma(y_1)
+(g+3)\Gamma(0)
\\ & $D3$, $D4$, $E3$, $E4$  &\\
\hline
-\ 0\ + & $H1$, $H2$, $I3$, $I4$  &  \Gamma(x_1) +(g+3)\Gamma(x_2)+ \Gamma(y_1)
  +(g+3)\Gamma(0)
\\ & $J1$, $J2$, $J3$, $J4$ &
\\
\hline
--+ & $H1$, $I3$,    & \Gamma(x_1) +\Gamma(x_2)+ \Gamma(y_1)  +(g+3)\Gamma(0) 
\\ &$J1$, 
$ J2$, 
$J2$,
$J3$ & \\
\hline
++- & $K1$, 
$K2$,
$K2$,
$K3$&\Gamma(x_1) +\Gamma(x_2)+  \Gamma(y_1)  +\Gamma(0)\\
&&\\
\hline
\ 0\ 0\ - & $F1$, 
$ F2$, 
$F2$,
$F3$& \Gamma(x_1) +\Gamma(x_2) +\Gamma(y_1)+\Gamma(0) 
\\
&&\\
\hline
-\ 0\   0 &$F1$, $F2$,  $G1$, $G2$, &(g+3)\Gamma(x_1) +\Gamma(x_2)
+(g+3)\Gamma(y_1)+\Gamma(0)
\\  &$H3$, $H4$, $I1$, $I2$  & 
\\
\hline
0\ 0\  0 &$F1$, 
$ F2$  
$F2$,
&\Gamma(x_1) +\Gamma(x_2) +(g+3)\Gamma(y_1)+\Gamma(0) 
\\ & $G1$, $H3$, $I1$  & \\
\hline
\end{array} 
\]
\caption{The double Gromov-Witten invariants $F_{2,k,g}^{2,1}(x_1,x_2,y_1)$.\label{table:example}}
\end{table}


\medskip

For higher genus $g$, the computation is not much more difficult in this special case.
In each graph we simply need to replace the gray vertex and its 2 incident edges by $g+1$ gray vertices and the corresponding $2(g+1)$ edges. When there is an intermediate white vertex, we simply need to decide 
its position among the $g+1$ gray vertices; there are $g+3$
choices. This gives rise to various factors of 
$g+3$
in Table \ref{table:example}.
For example, in chamber $-+-$ and genus $g=0$, the graphs $A3, C1, E1$
are isomorphic as unoriented
 graphs, and they account for the $3$ possible positions of the white vertex in $C$ relative to the black and gray vertices.

Suppose the two black-gray edges had weight $w$ in a graph of genus $0$. Now in the genus $g$ graph, that total weight $w$ has to be distributed among $g+1$ weights. Therefore the resulting contribution is 
\[
\Gamma_g(w)=\sum_{
w_1+\ldots +  w_{g+1}=w}
\,\,\, \prod_{i=1}^{g+1}w_i^2,
\]
where $w_1, \ldots, w_{g+1}$ are positive integers. Note that this is a polynomial of degree $3g+2$, which has the same parity as $g$.
For example we have 
\[
\Gamma_0(w)=w^2\quad \mbox{and}\quad \Gamma_1(w)=\frac{(w-1)w(w+1)(w^2+1)}{30}.
\]
To make Table \ref{table:example} easier to read, 
we divide $F_{w,k,g}^{2,1}(x_1,x_2,y_1)$ by 
$|y_1|$ and write
\[
\Gamma(w) = \Gamma_g(|w+k|).
\]

\section{Concluding remarks}\label{sec:conclusion}

The methods exposed in this note should also be useful in other related contexts:

\begin{itemize}
\item One can similarly define 
double Gromov-Witten invariants for more general toric
  surfaces. 
It should be possible to establish their piecewise polynomiality by pushing 
our method through, at least for toric surfaces corresponding to $h$-transverse polygons 
(see \cite{Br6b} for the definition of $h$-transversality).
Using methods similar to ours, \cite{ArdBlo} and \cite{LiuOss} prove the
  polynomiality of Severi degrees of many toric surfaces, including
  many singular ones. These papers show that Severi degrees also vary nicely as one
  changes the toric surface, and this might be the case for
  double Gromov-Witten invariants as well. (For instance, in Table \ref{table:example}, note that the polynomial in
  each chamber is also a 
 polynomial function of $k$). 

\item One may also try to extend this approach to 
\emph{double tropical Welschinger invariants}
of Hirzebruch surfaces (see \cite{IKS3} for the definition of tropical
Welschinger invariants). Due to the different treatment given to edges of
even and odd weights in the real multiplicity of a floor diagram,
there is no hope that double tropical Welschinger invariants are
piecewise polynomial. 
However it is reasonable to expect that they 
are piecewise quasipolynomial.

\item More generally, Block and G{\"o}ttsche defined in \cite{BlGo14}
 \emph{refined invariants} of toric surfaces. These invariants are
 univariate polynomials that
 interpolate between Gromov-Witten and tropical Welschinger
 invariants, and can also be computed via floor diagrams.
It would be interesting to apply the methods 
presented here to  double refined invariants.

\item It may also be possible to write explicit wall-crossing formulas
  describing how the function $F_{a,k,g}^{n_1,n_2}$ changes between two
  adjacent chambers. 
  For double Hurwitz numbers this was carried out
  in \cite{Ard09, CJM11}; it requires additional combinatorial
  insight and non-trivial technical hurdles. It would be interesting
  to extend it to this setting. 

\item 
Floor diagrams have higher dimensional versions, at least in genus 0
(see \cite{Br7,Br6}). However one should not expect analogous
piecewise polynomiality about curve enumeration in
spaces of dimension at least 3. Indeed, the multiplicity of a floor
diagram (i.e. the number of complex curves it encodes) includes the
multiplicity of its floors. A key point for piecewise polynomiality to
hold for double Hurwitz numbers and double Gromov-Witten invariants is that
in dimension 1 and 2, the multiplicity of
a floor is always equal to 1. However starting in dimension $n\ge 3$, the
multiplicity of a floor is itself a Gromov-Witten invariant of a space
of 
$n-1$,
and this invariant 
increases exponentially with the degree of the floor.

\section{Acknowledgments}
Federico Ardila would like to thank Erwan Brugall\'e and the Institut de Math\'ematiques de Jussieu for their hospitality; this work was carried out during a visit to Paris in early 2013 full of delicious food and wine, great music and company, and interesting mathematics. He also thanks San Francisco State University and the US National Science Foundation 
for their financial support and the University of California at Berkeley for hosting him during his sabbatical. Finally, we would like to thank the anonymous referees for their valuable feedback.

\end{itemize}

\bibliographystyle{alpha}
\bibliography{Biblio}

\begin{thebibliography}{ABLdM11}

\bibitem[AB13]{ArdBlo}
F.~Ardila and F.~Block.
\newblock Universal polynomials for {S}everi degrees of toric surfaces.
\newblock {\em Adv. Math.}, 237:165--193, 2013.

\bibitem[ABLdM11]{Br8}
A.~Arroyo, E.~Brugall\'e, and L.~Lopez~de Medrano.
\newblock Recursive formula for {W}elschinger invariants.
\newblock {\em Int Math Res Notices}, 5:1107--1134, 2011.

\bibitem[Ard09]{Ard09}
F.~Ardila.
\newblock Double {H}urwitz numbers and {DPV} remarkable spaces.
\newblock Preprint, 2009.

\bibitem[BBM14]{Br9}
B.~Bertrand, E.~Brugall{\'e}, and G.~Mikhalkin.
\newblock Genus 0 characteristic numbers of the tropical projective plane.
\newblock {\em Compos. Math.}, 150(1):46--104, 2014.

\bibitem[BCK13]{BlCoKe13}
F.~Block, S.~J. Colley, and G.~Kennedy.
\newblock Computing {S}everi degrees with long-edge graphs.
\newblock arXiv:1303.5308, 2013.

\bibitem[Bea83]{Beau}
A.~Beauville.
\newblock {\em Complex algebraic surfaces}, volume~68 of {\em London
  Mathematical Society Lecture Note Series}.
\newblock Cambridge University Press, Cambridge, 1983.

\bibitem[BG14a]{BlGo14bis}
F.~Block and L.~G{\"o}ttsche.
\newblock Fock spaces and refined {S}everi degrees.
\newblock arXiv:1409.4868, 2014.

\bibitem[BG14b]{BlGo14}
F.~Block and L.~G{\"o}ttsche.
\newblock Refined curve counting with tropical geometry.
\newblock arXiv:1407.2901, 2014.

\bibitem[BGM12]{BGM}
F.~Block, A.~Gathmann, and H.~Markwig.
\newblock Psi-floor diagrams and a {C}aporaso-{H}arris type recursion.
\newblock {\em Israel J. Math.}, 191(1):405--449, 2012.

\bibitem[Bla64]{Bla}
G.~R. Blakley.
\newblock Combinatorial remarks on partitions of a multipartite number.
\newblock {\em Duke Math. J.}, 31:335--340, 1964.

\bibitem[Blo11]{Blo11}
F.~Block.
\newblock Relative node polynomials for plane curves.
\newblock In {\em 23rd {I}nternational {C}onference on {F}ormal {P}ower
  {S}eries and {A}lgebraic {C}ombinatorics ({FPSAC} 2011)}, Discrete Math.
  Theor. Comput. Sci. Proc., AO, pages 199--210. Assoc. Discrete Math. Theor.
  Comput. Sci., Nancy, 2011.

\bibitem[BM]{Br6}
E.~Brugall\'e and G.~Mikhalkin.
\newblock Floor decompositions of tropical curves : the general case.
\newblock In preparation, preliminary version available at
  http://www.math.jussieu.fr/$\sim$brugalle/articles/FDn/FDGeneral.pdf.

\bibitem[BM07]{Br7}
E.~Brugall\'e and G.~Mikhalkin.
\newblock Enumeration of curves via floor diagrams.
\newblock {\em Comptes Rendus de l'Académie des Sciences de Paris}, série I,
  345(6):329--334, 2007.

\bibitem[BM08]{Br6b}
E.~Brugall\'e and G.~Mikhalkin.
\newblock Floor decompositions of tropical curves : the planar case.
\newblock {\em Proceedings of 15th {G}\"okova {G}eometry-{T}opology
  Conference}, pages 64--90, 2008.

\bibitem[BP13]{Br14}
E.~Brugall{\'e} and N.~Puignau.
\newblock Enumeration of real conics and maximal configurations.
\newblock {\em J. Eur. Math. Soc. (JEMS)}, 15(6):2139--2164, 2013.

\bibitem[BR07]{BecRob}
M.~Beck and S.~Robins.
\newblock {\em Computing the continuous discretely}.
\newblock Undergraduate Texts in Mathematics. Springer, New York, 2007.
\newblock Integer-point enumeration in polyhedra.

\bibitem[Bru14]{Bru14}
E.~Brugall\'e.
\newblock Floor diagrams of plane curves relative to a conic and {GW-W}
  invariants of {D}el {P}ezzo surfaces.
\newblock arXiv:1404.5429, 2014.

\bibitem[BV97]{BrVe}
M.~Brion and M.~Vergne.
\newblock Lattice points in simple polytopes.
\newblock {\em J. Amer. Math. Soc}, 10:371--392, 1997.

\bibitem[CJM10]{CJM}
R.~Cavalieri, Paul J., and H.~Markwig.
\newblock Tropical {H}urwitz numbers.
\newblock {\em J. Algebraic Combin.}, 32(2):241--265, 2010.

\bibitem[CJM11]{CJM11}
R.~Cavalieri, P.~Johnson, and H.~Markwig.
\newblock Wall crossings for double {H}urwitz numbers.
\newblock {\em Adv. Math.}, 228(4):1894--1937, 2011.

\bibitem[DCPV10a]{DPV1}
C.~De~Concini, C.~Procesi, and M.~Vergne.
\newblock Vector partition functions and generalized dahmen and micchelli
  spaces.
\newblock {\em Transformation Groups}, 15(4):751--773, 2010.

\bibitem[DCPV10b]{DPV2}
C.~De~Concini, C.~Procesi, and M.~Vergne.
\newblock Vector partition functions and index of transversally elliptic
  operators.
\newblock {\em Transformation Groups}, 15(4):775--811, 2010.

\bibitem[Ehr62]{Ehr}
E.~Ehrhart.
\newblock Sur les poly\`edres rationnels homoth\'etiques \`a {$n$}\ dimensions.
\newblock {\em C. R. Acad. Sci. Paris}, 254:616--618, 1962.

\bibitem[ELSV01]{ELSV}
T.~Ekedahl, S.~Lando, M.~Shapiro, and A.~Vainshtein.
\newblock Hurwitz numbers and intersections on moduli spaces of curves.
\newblock {\em Invent. Math.}, 146(2):297--327, 2001.

\bibitem[FH91]{FulHar}
W.~Fulton and J.~Harris.
\newblock {\em Representation theory}, volume 129 of {\em Graduate Texts in
  Mathematics}.
\newblock Springer-Verlag, New York, 1991.
\newblock A first course, Readings in Mathematics.

\bibitem[FM10]{FM}
S.~Fomin and G.~Mikhalkin.
\newblock Labelled floor diagrams for plane curves.
\newblock {\em Journal of the European Mathematical Society}, 12:1453--1496,
  2010.

\bibitem[GH94]{GriHar78}
P.~Griffiths and J.~Harris.
\newblock {\em Principles of algebraic geometry}.
\newblock Wiley Classics Library. John Wiley \& Sons, Inc., New York, 1994.
\newblock Reprint of the 1978 original.

\bibitem[GJV05]{Vak3}
I.~Goulden, D.~Jackson, and R.~Vakil.
\newblock Towards the geometry of double {H}urwitz numbers.
\newblock {\em Adv. Math.}, 198(1):43--92, 2005.

\bibitem[IKS09]{IKS3}
I.~Itenberg, V.~Kharlamov, and E.~Shustin.
\newblock A {C}aporaso-{H}arris type formula for {W}elschinger invariants of
  real toric {D}el {P}ezzo surfaces.
\newblock {\em Comment. Math. Helv.}, 84:87--126, 2009.

\bibitem[IP04]{IP00}
E.-N Ionel and T.~H. Parker.
\newblock The symplectic sum formula for {G}romov-{W}itten invariants.
\newblock {\em Ann. of Math.}, 159(2):935--1025, 2004.

\bibitem[Kos59]{Kos}
B.~Kostant.
\newblock A formula for the multiplicity of a weight.
\newblock {\em Trans. Amer. Math. Soc.}, 93:53--73, 1959.

\bibitem[Li02]{Li02}
J.~Li.
\newblock A degeneration formula of {GW}-invariants.
\newblock {\em J. Differential Geom.}, 60(2):199--293, 2002.

\bibitem[Liu13]{Liu13}
F.~Liu.
\newblock A combinatorial analysis of {S}everi degrees.
\newblock arXiv:1304.1256, 2013.

\bibitem[LO14]{LiuOss}
F.~Liu and B.~Osserman.
\newblock Severi degrees on toric surfaces.
\newblock arXiv:1401.7023, 2014.

\bibitem[LR01]{LiRu01}
A.~Li and Y.~Ruan.
\newblock Symplectic surgery and {G}romov-{W}itten invariants of {C}alabi-{Y}au
  3-folds.
\newblock {\em Invent. Math.}, 145(1):151--218, 2001.

\bibitem[Mik05]{Mik1}
G.~Mikhalkin.
\newblock {Enumerative tropical algebraic geometry in $\mathbb R^2$}.
\newblock {\em J. Amer. Math. Soc.}, 18(2):313--377, 2005.

\bibitem[Shu12]{Shu9}
E.~Shustin.
\newblock Tropical and algebraic curves with multiple points.
\newblock In {\em Perspectives in analysis, geometry, and topology}, volume 296
  of {\em Progr. Math.}, pages 431--464. Birkh\"auser/Springer, New York, 2012.

\bibitem[SSV08]{SSV}
S.~Shadrin, M.~Shapiro, and A.~Vainshtein.
\newblock Chamber behavior of double {H}urwitz numbers in genus 0.
\newblock {\em Adv. Math.}, 217(1):79--96, 2008.

\bibitem[Stu95]{Stu95}
B.~Sturmfels.
\newblock On vector partition functions.
\newblock {\em J. Combin. Theory Ser. A}, 72(2):302--309, 1995.

\bibitem[Vak00]{Vak2}
R.~Vakil.
\newblock Counting curves on rational surfaces.
\newblock {\em Manuscripta math.}, 102:53--84, 2000.

\end{thebibliography}

\end{document}